\title{Correction of BDFk for  fractional Feynman-Kac equation with L\'{e}vy flight
\thanks{This work was supported by NSFC 11601206. }}
\author{Jiankang Shi \thanks{
School of Mathematics and Statistics, Gansu Key Laboratory of Applied Mathematics and Complex Systems,
 Lanzhou University, Lanzhou 730000, P.R. China  (Email: shijk17@lzu.edu.cn)}
\and Minghua Chen\thanks{School of Mathematics and Statistics, Gansu Key Laboratory of Applied Mathematics and Complex Systems,
 Lanzhou University, Lanzhou 730000, P.R. China  (Email: chenmh@lzu.edu.cn)}
}
\begin{document}

\maketitle

\begin{abstract}
In this work, we present the correction formulas of the $k$-step BDF convolution quadrature at the starting $k-1$ steps for the  fractional Feynman-Kac equation with L\'{e}vy flight.
 The desired $k$th-order convergence rate can be achieved with nonsmooth data.
Based on the idea of [{\sc Jin, Li, and Zhou},
SIAM J. Sci. Comput., 39 (2017), A3129--A3152], we  provide a detailed convergence analysis for the correction BDF$k$ scheme. The numerical experiments with spectral method are given to illustrate the effectiveness of the presented method. To the best of our knowledge, this is the first proof of the convergence analysis and numerical verified the sapce fractional evolution equation with correction BDF$k$.
\end{abstract}

\begin{keywords}
Fractional Feynman-Kac equation with L\'{e}vy flight,  correction  of BDF$k$, fractional substantial derivative, error estimates.
\end{keywords}


\pagestyle{myheadings}
\thispagestyle{plain}
\markboth{J. SHI AND M.  CHEN}{CORRECTION OF BDFK FOR FRACTIONAL FEYNMAN-KAC   EQUATION}
\section{Introduction}
Functionals of Brownian motion have diverse applications in physics, mathematics, and other fields.
The probability density function of Brownian functionals satisfies the Feynman-Kac formula, which is  a Schr\"{o}dinger equation in imaginary time. The functionals of non-Brownian motion, or anomalous diffusion, follow the general fractional Feynman-Kac equation  \cite{CB:11,Carmi:10}, where the fractional substantial derivative is involved \cite{FJBE:06}. This paper focuses on providing the correction of  $k$-step backward differential formulas (BDF$k$) for backward  fractional Feynman-Kac equation   with L\'{e}vy flight \cite{CB:11,Carmi:10,ChenD:15,CD:18}
\begin{equation}\label{1.1}
\begin{split}
&^{C}_{s}D^{\gamma}_{t}G(t)+A G(t)=f(t)~~{\rm with}~~A:=(-\Delta)^{\alpha/2},
\end{split}
\end{equation}
where  $f$ is a given function and the initial condition $G(0)=G_0$ with the homogeneous Dirichlet boundary conditions.
Here $(-\Delta)^{\alpha/2}$ with $\alpha \in (1,2)$ is the fractional Laplacian, the definition of which is based on the spectral decomposition of the Dirichlet Laplacian \cite{LPGS:20,Thomee:06};
and the Caputo fractional substantial derivative with $0<\gamma<1$ is defined by \cite{CD:15,ChenD:15,FJBE:06}
\begin{equation}\label{1.20001}
\begin{split}
  ^{C}_{s}D^{\gamma}_{t}G(t)
    = \frac{1}{\Gamma(1-\gamma)}\int^{t}_{0}{\frac{e^{-\sigma(t-s)}}{(t-s)^{\gamma}}\left(\sigma + \frac{\partial}{\partial s}\right)G(s)}ds
\end{split}
\end{equation}
with  a constant $\sigma>0$. It should be noted that $\eqref{1.20001}$ reduces  to the Caputo fractional  derivative if $\sigma=0$.

High-order schemes   for the time discretization of \eqref{1.1} with $\sigma=0$ (Caputo fractional  derivative) have been proposed by various authors.
There are two predominant  discretization techniques in time direction:  L1-type approximation \cite{Lin:07,Lv:16,OS:74,Sun:06} and Lubich-Gr\"{u}nwald-Letnikov
approximation \cite{CLTA:07,Lubich:86,Meerschaert:04}.
For the first group, under the time regularity assumption on the solution, they developed the L1 schemes \cite{Lin:07,Sun:06} for the  Caputo fractional derivative   and strictly proved the stability and convergence rate with $\mathcal{O}\left(\tau^{2-\alpha} \right)$. It is   extended to the quadratic interpolation case  \cite{Lv:16} with a convergence rate  $\mathcal{O}\left(\tau^{3-\alpha} \right)$.
Recently, for a layer or blows up at $t=0$, a sharp new discrete stability result has been considered in \cite{SOG:17}.
In the second group, using fractional linear multistep method and Fourier transform, error analysis of up to sixth order temporal accuracy for fractional ordinary differential equation has been discussed \cite{Lubich:86} with the starting quadrature weights schemes. A few years later, based on operational calculus with sectorial operator,
 nonsmooth data error estimates for fractional evolution equations have been studied in \cite{Cuesta:06,Lu:96} and developed  in \cite{JLZ:16,JLZ:17} to restore   $\mathcal{O}\left(\tau^k \right)$.
Under the time regularity assumption, high order finite difference method  (BDF2) for the anomalous-diffusion equation has been studied in \cite{LiD:13} by analyzing the properties of the coefficients.
Application of  Grenander-Szeg\"{o} theorem, stability and convergence for time-fractional sub-diffusion equation have been provided in \cite{Hao:15,Ji:15} with  weighted and shifted Gr\"{u}nwald operator.

In recent years, the numerical method for backward  fractional Feynman-Kac equation  \eqref{1.1} with $\alpha=2$ were developed. For example,
the time discretization of Caputo fractional substantial derivative  was first provided in \cite{CD:15} with the starting quadrature weights schemes.
Spectral methods for substantial fractional ordinary differential equations was presented in \cite{HZS:18}.
Under smooth assumption, numerical algorithms (finite difference and finite element) for   \eqref{1.1} with $\alpha=2$ are considered in \cite{DCB:15}.
Moreover, the second-convergence  analysis are discussed in \cite{CD:18,HCL:17}. In addition, the problem with nonsmooth solution is also discussed in  \cite{CD:18}.
Recently, the second-order error estimates are presented in \cite{SND:2020} with nonsmooth initial data.
However, it seems that there are no published works for more than three order  accurate scheme  for model \eqref{1.1} with L\'{e}vy flight.
In this work, we  provide a detailed convergence analysis of the correction BDF$k$  ($k\leq 6$) for \eqref{1.1} with nonsmooth data.

We first provide the solutions of  fractional Feynman-Kac equation with L\'{e}vy flight.
\subsection*{Solution representation for \eqref{1.1}}
Let $\Omega$ be a bounded domain with a boundary $\partial\Omega$.
If $\sigma=0$, then  (\ref{1.1}) reduce to the following time-space Caputo-Riesz fractional diffusion equation \cite{CDW:13}
\begin{equation*}
\left\{
\begin{split}
&^{C}\partial^{\gamma}_{t}G(x,t)+A G(x,t)=f(x,t), &\quad (x,t)&\in \Omega \times (0,T]  \\
&G(x,0)=v(x),  &\quad x&\in \Omega \\
&G(x,t)=0, &\quad (x,t)&\in \partial\Omega \times (0,T],
\end{split}\right.
\end{equation*}
where $f$ is a given function and $A$ denotes the Laplacian $(-\Delta)^{\alpha/2}$.
Based on the idea of \cite{JLZ:16,SY:11,Thomee:06} with the  eigenpairs $\{ (\lambda^{\alpha/2}_{j},\varphi_{j}) \}^{\infty}_{j=1}$ of the operator $A$, it  is easy to get
\begin{equation}\label{1.3}
G(x,t)=E(t)v+\int^{t}_{0}{\overline{E}(t-s)f(s)}ds.
\end{equation}
Here the operators
$E(t)$ and $\overline{E}(t)$ are, respectively, given by
$$E(t)=\sum^{\infty}_{j=1}E_{\gamma,1}(-\lambda^{\alpha/2}_{j}t^{\gamma})(v,\varphi_{j})\varphi_{j}(x)$$
and
$$\overline{E}(t){\chi}=\sum^{\infty}_{j=1}t^{\gamma-1}E_{\gamma,\gamma}(-\lambda^{\alpha/2}_{j}t^{\gamma})(v,\varphi_{j})(\chi,\varphi_{j})\varphi_{j}(x)$$
with the Mittag-Leffler function $E_{\gamma,\alpha/2}(z)$  \cite[p.\,17]{Podlubny:99}, i.e.,
$$E_{\gamma,\alpha/2}(z)=\sum^{\infty}_{k=0}\frac{z^{k}}{\Gamma(k\gamma+\alpha/2)},z\in\mathbb{C}.$$

If $\sigma>0$, using  \eqref{1.3} and the following  property \cite{CD:15,ChenD:15}
\begin{equation*}
\begin{split}
^{C}_{s}D^{\gamma}_{t}e^{-\sigma t}G(t)= e^{-\sigma t} \left({^{C}\partial^{\gamma}_{t}}G(t)\right),
\end{split}
\end{equation*}
infer that
\begin{equation}\label{1.4}
G(x,t)=e^{-\sigma t}E(t)v+e^{-\sigma t}\int^{t}_{0}{\overline{E}(t-s)e^{\sigma s}f(s)}ds.
\end{equation}

From the above solution representation of \eqref{1.1}, we known that   the smoothness of all the data of \eqref{1.1} do not imply the smoothness of the solution $G$.
For example, if $G_0 \in L^2(\Omega)$ and $\gamma \in (0,1)$ with $\alpha=2$, the following estimate holds  \cite[Theorem 2.1]{SY:11}
$$\|^C\partial_t^\gamma G(t)\|_{L^2(\Omega)}\leq ct^{-\gamma}\| G_0\|_{L^2(\Omega)},$$
which reduces to a classical case $\|\partial_t G(t)\|_{L^2(\Omega)}\leq ct^{-1}\| G_0\|_{L^2(\Omega)}$ if $\gamma=1$ \cite[Lemma 3.2 ]{Thomee:06}.
This shows that $G$ has an initial layer  at $t\rightarrow 0^{+}$ (i.e., unbounded near $t=0$) \cite{SOG:17}.
Hence, the high-order convergence rates may not hold for nonsmooth data.
Thus, the corrected algorithms are necessary in order to restore the desired convergence rate, even for smooth initial data.
In this paper, based on the idea of \cite{JLZ:17}, we present the correction  of the $k$-step BDF convolution quadrature (CQ) at the starting $k-1$ steps for the backward fractional Feynman-Kac equation
with L\'{e}vy flight \eqref{1.1}.  The desired $k$th-order convergence rate can be achieved with nonsmooth data.
To the best of our knowledge, this is the first proof of the convergence analysis and numerical verified the sapce fractional evolution equation with correction BDF$k$.

The paper is organized as follows. In the next Section, we provide  the correction of the $k$-step BDF convolution quadrature at the starting $k-1$ steps for \eqref{1.1}.
In Section 3, based on operational calculus,  the detailed convergence analysis of the correction BDF$k$ are provided. To show the effectiveness of the presented schemes, the results of numerical experiments are reported in Section 4.

\section{Correction of BDF$k$}\label{sec:1}
Let $t_{n}=n \tau, n=0,1,\ldots,N$ with $\tau=\frac{T}{N}$ the uniform time steplength, and let $G^{n}$ denote the approximation of $G(t)$ and $f^{n}=f(t_{n})$. The convolution quadrature  generated by BDF$k$, $k=1,2,\ldots,6,$ approximates the Riemann-Liouville fractional substantial derivative $_sD^{\gamma}_{t}$ by \cite{CD:15}
\begin{equation}\label{2.1}
  \overline{D}^{\gamma}_{\tau}\varphi^{n}:=\frac{1}{\tau^{\gamma}}\sum^{n}_{j=0}q_{j}\varphi^{n-j}
\end{equation}
with $\varphi^{n}=\varphi(t_{n})$. Here  the weights $q_{j}=e^{-\sigma j \tau}b_{j}$ and $b_{j}$ are the coefficients in the series expansion
\begin{equation}\label{2.2}
  \delta^{\gamma}_{\tau}(\xi)=\frac{1}{\tau^{\gamma}}\sum^{\infty}_{j=0}b_{j}\xi^{j} \quad{\rm with} \quad \delta_{\tau}(\xi):=\frac{1}{\tau}\sum^{k}_{j=1}\frac{1}{j}(1-\xi)^{j}
  \quad{\rm and}~~\delta(\xi):=\delta_{1}(\xi).
\end{equation}
Then the standard BDF$k$ for \eqref{1.1} is as following
\begin{equation}\label{2.3}
  \overline{D}^{\gamma}_{\tau} \left(G^{n}-e^{-\sigma t_{n}} G(0)\right) + AG^{n}=f(t_{n}).
\end{equation}

To obtain the $k$-order accuracy with nosmooth data, we correct the standard   BDF$k$ \eqref{2.3} at the starting $k-1$ steps by
\begin{equation}\label{2.4}
\begin{split}
    \overline{D}^{\gamma}_{\tau} \left(G^{n}-e^{-\sigma t_{n}} G(0)\right) + AG^{n}=&-a^{(k)}_{n}e^{-\sigma n \tau}AG^{0}+b^{(k)}_{n}f(0)\\
         &+\sum^{k-2}_{l=1}d^{(k)}_{l,n}\tau^{l}\partial^{l}_{t}f(0)+f(t_{n}) \quad 1\le n \le k-1;\\
    \overline{D}^{\gamma}_{\tau} \left(G^{n}-e^{-\sigma t_{n}} G(0)\right) + AG^{n}=&f(t_{n}) \quad k \le n \le N.
\end{split}
\end{equation}
Here the correction  coefficients $a^{(k)}_{n}$  and $b^{(k)}_{n}$ are given in  Table \ref{table:1} and $d^{(k)}_{l,n}$ is given in Table \ref{table:2}.
We  noted that the correction coefficients share similarities with  the fractional Caputo equations \cite{JLZ:17}.
\subsection{Solution representation with CQ for \eqref{1.1}}
\begin{table}[h]\fontsize{8.5pt}{12pt}\selectfont
\begin{center}
\caption {The coefficients $a^{(k)}_{n}$ and $b^{(k)}_{n}$.} \vspace{5pt}
\begin{tabular}{|c| c c c c c|c c c c c|}                                \hline  
  Order of BDF & $a^{(k)}_{1}$       & $a^{(k)}_{2}$       & $a^{(k)}_{3}$     & $a^{(k)}_{4}$       & $a^{(k)}_{5}$    &
                 $b^{(k)}_{1}$       & $b^{(k)}_{2}$       & $b^{(k)}_{3}$     & $b^{(k)}_{4}$       & $b^{(k)}_{5}$      \\ \hline
  $k=2$        & $\frac{1}{2}$       &                     &                   &                     &                  &
                 $\frac{1}{2}$       &                     &                   &                     &                    \\ \hline
  $k=3$        & $\frac{11}{12}$     & $-\frac{5}{12}$     &                   &                     &                  &
                 $\frac{11}{12}$     & $-\frac{5}{12}$     &                   &                     &                    \\ \hline
  $k=4$        & $\frac{31}{24}$     & $-\frac{7}{6}$      & $\frac{3}{8}$     &                     &                  &
                 $\frac{31}{24}$     & $-\frac{7}{6}$      & $\frac{3}{8}$     &                     &                    \\ \hline
  $k=5$        & $\frac{1181}{720}$  & $-\frac{177}{80}$   & $\frac{341}{240}$ & $-\frac{251}{720}$  &                  &
                 $\frac{1181}{720}$  & $-\frac{177}{80}$   & $\frac{341}{240}$ & $-\frac{251}{720}$  &                    \\ \hline
  $k=6$        & $\frac{2837}{1440}$ & $-\frac{2543}{720}$ & $\frac{17}{5}$    & $-\frac{1201}{720}$ & $\frac{95}{288}$ &
                 $\frac{2837}{1440}$ & $-\frac{2543}{720}$ & $\frac{17}{5}$    & $-\frac{1201}{720}$ & $\frac{95}{288}$   \\ \hline 
\end{tabular}\label{table:1}
\end{center}
\end{table}
\begin{table}[h]\fontsize{9pt}{12pt}\selectfont
 \begin{center}
  \caption {The coefficients $d^{(k)}_{l,n}$.} \vspace{5pt}
\begin{tabular}{|c c| c c c c c|}                                \hline  
  Order of BDF &       & $d^{(k)}_{l,1}$     & $d^{(k)}_{l,2}$     & $d^{(k)}_{l,3}$   & $d^{(k)}_{l,4}$     & $d^{(k)}_{l,5}$  \\ \hline
  $k=3$        & $l=1$ & $\frac{1}{12}$      &        0            &                   &                     &                  \\ \hline
  $k=4$        & $l=1$ & $\frac{1}{6}$       & $-\frac{1}{12}$     &       0           &                     &                  \\
               & $l=2$ &      0              &        0            &       0           &                     &                  \\ \hline
  $k=5$        & $l=1$ & $\frac{59}{240}$    & $-\frac{29}{120}$   & $\frac{19}{240}$  &       0             &                  \\
               & $l=2$ & $\frac{1}{240}$     & $-\frac{1}{240}$    &       0           &       0             &                  \\
               & $l=3$ & $-\frac{1}{720}$     &       0             &       0           &       0             &                  \\ \hline
  $k=6$        & $l=1$ & $\frac{77}{240}$    & $-\frac{7}{15}$     & $\frac{73}{240}$  & $-\frac{3}{40}$     &      0           \\
               & $l=2$ & $\frac{1}{96}$      & $-\frac{1}{60}$     & $\frac{1}{160}$   &       0             &      0           \\
               & $l=3$ & $-\frac{1}{360}$     & $\frac{1}{720}$     &       0           &       0             &      0           \\
               & $l=4$ &       0             &       0             &       0           &       0             &      0           \\\hline 
    \end{tabular}\label{table:2}
  \end{center}
\end{table}

First we split right-hand side $f$ into
\begin{equation}\label{2.5}
f(t)=f(0)+\sum^{k-2}_{l=1}\frac{t^{l}}{l!}\partial^{l}_{t}f(0)+R_{k}.
\end{equation}
Here
\begin{equation}\label{2.6}
R_{k}=f(t)-f(0)-\sum^{k-2}_{l=1}\frac{t^{l}}{l!}\partial^{l}_{t}f(0)=\frac{t^{k-1}}{(k-1)!}\partial^{k-1}_{t}f(0)
       +\frac{t^{k-1}}{(k-1)!} \ast \partial^{k}_{t}f,
\end{equation}
and the symbol $\ast$ denotes Laplace convolution. Let $W(t):=G(t)-e^{-\sigma t} G(0)$ with $W(0)=0$.
Then we can rewrite \eqref{1.1} as
\begin{equation}\label{2.7}
  _{s}D^{\gamma}_{t}W(t) + AW(t)=-A e^{-\sigma t} G(0)+ f(0)+\sum^{k-2}_{l=1}\frac{t^{l}}{l!}\partial^{l}_{t}f(0)+R_{k}
\end{equation}
with $ ^{C}_{s}D^{\gamma}_{t}G(t)=_s\!D^{\gamma}_{t}\left[G(t)-e^{-\sigma t} G(0)\right]=_s\!D^{\gamma}_{t}W(t)$ in \cite{CD:15}.

Applying Laplace transform in \cite{CD:15} to both sides of the above equation, we have
\begin{equation*}
  (\sigma+z)^{\gamma}\widehat{W}(z)+A\widehat{W}(z)=-A(\sigma+z)^{-1} G(0)+z^{-1} f(0)+\sum^{k-2}_{l=1}\frac{1}{z^{l+1}}\partial^{l}_{t}f(0)+\widehat{R}_{k}(z),
\end{equation*}
where $\widehat{W}$ and $\widehat{R}_{k}$ denote the Laplace transform, i.e, $\widehat{u}(z)=\int^{\infty}_{0}{e^{-zt}u(t)}dt$. 
Then
\begin{equation*}
  \widehat{W}(z) = \left( \left(\sigma+z\right)^{\gamma}+A  \right)^{-1}\left[- A (\sigma+z)^{-1} G(0)+z^{-1} f(0) +\sum^{k-2}_{l=1}\frac{1}{z^{l+1}}\partial^{l}_{t}f(0)+\widehat{R}_{k}(z) \right].
\end{equation*}
By the inverse Laplace transform, we can obtain the solution $W(t)$ as following
\begin{equation}\label{2.10}
\begin{split}
W(t)= & \frac{1}{2\pi i} \int_{\Gamma_{\theta,\kappa}} {e^{zt}K(\sigma+z)\left( -A G(0)+\frac{\sigma+z}{z} f(0)  \right) } dz \\
      & +\frac{1}{2\pi i}\int_{\Gamma_{\theta,\kappa}}{e^{zt}(\sigma+z)K(\sigma+z)
      \left(\sum^{k-2}_{l=1}\frac{1}{z^{l+1}}\partial^{l}_{t}f(0)+\widehat{R}_{k}(z)\right)}dz
\end{split}
\end{equation}
with
\begin{equation}\label{2.11}
  K(\sigma+z)=\left( \left(\sigma+z\right)^{\gamma}+A  \right)^{-1}(\sigma+z)^{-1}.
\end{equation}
Here the $\Gamma_{\theta,\kappa}$ is defined by \cite{DLQW:18}
\begin{equation}\label{2.a10}
\Gamma_{\theta,\kappa}=\{ z \in \mathbb{C}:|z|=\kappa, |\arg z| \le \theta \} \cup \{ z \in \mathbb{C}: z=r e^{\pm i\theta},\kappa\le r <\infty \}.
\end{equation}
It should be noted that we have  the following resolvent bound \cite{Cuesta:06}
\begin{equation}\label{2.11111}
||(z^{\gamma}+A)^{-1}||\le c|z|^{-\gamma} \quad {\rm and} \quad ||K(z)||\le c|z|^{-1-\gamma}
\end{equation}
with a positive constant $c$.




\subsection{Discrete solution representation with CQ for \eqref{2.4}}
In this subsection, we provide the discrete solution  of \eqref{2.4}.
\begin{lemma}\label{Lemma 2.1}
Let $f\in C^{k-1}([0,T];L^{2}(\Omega))$ and $\int^{t}_{0}{(t-s)^{\gamma-1}}||\partial^{l}_{s}f(s)||_{L^{2}(\Omega)}ds<\infty$. Let discrete solution $W^{n}=G^{n}-e^{-\sigma n \tau}G(0)$ with $W^{0}=0$. Then
\begin{equation}\label{2.13}
\begin{split}
W^{n}
=&\frac{1}{2 \pi i} \int_{\Gamma^{\tau}_{\theta,\kappa}} e^{t_{n}z} K\left(\delta_{\tau}(e^{-(\sigma+z)\tau})\right)\\
&\qquad\left[-\mu_{1}(e^{-(\sigma+z)\tau}) AG^{0}+\frac{\delta_{\tau}(e^{-(\sigma+z)\tau})}{\delta_{\tau}(e^{-z\tau})}\mu_{2}(e^{-z\tau})f(0)\right]dz \\
& +\frac{1}{2 \pi i} \int_{\Gamma^{\tau}_{\theta,\kappa}}e^{t_{n}z}K\left(\delta_{\tau}(e^{-(\sigma+z)\tau})\right)\delta_{\tau}(e^{-(\sigma+z)\tau})\\
&\qquad\times \sum^{k-2}_{l=1}\left(\frac{\gamma_{l}(e^{-z\tau})}{l!} +\sum^{k-1}_{j=1}d^{(k)}_{l,n}e^{-z j\tau}\right)\tau^{l+1}\partial^{l}_{t}f(0)dz \\
& +\frac{1}{2 \pi i} \int_{\Gamma^{\tau}_{\theta,\kappa}}e^{t_{n}z}K\left(\delta_{\tau}(e^{-(\sigma+z)\tau})\right)
  \delta_{\tau}(e^{-(\sigma+z)\tau})\tau\widetilde{R}_{k}(e^{-z\tau}) dz.
\end{split}
\end{equation}
Here  the contour is
\begin{equation*}
\Gamma^{\tau}_{\theta,\kappa}=\{z\in\mathbb{C}:|z|=\kappa,|\arg z|\le\theta\}\cup
\left\{z\in\mathbb{C}:z=re^{\pm i\theta},\kappa\le r<\frac{\pi}{\tau\sin(\theta)}\right\},
\end{equation*}
and $\widetilde{R}_{k}(e^{-z\tau})= \sum^{\infty}_{n=1} e^{-zn\tau} R_{k}(t_{n})$,
and
\begin{equation}\label{2.14}
\begin{split}
\mu_{1}(\xi)&=\delta(\xi) \left( \frac{ \xi}{1- \xi} + \sum^{k-1}_{j=1}a^{(k)}_{j}\xi^{j}\right),\\
\mu_{2}(\xi)&=\delta(\xi) \left( \frac{ \xi}{1- \xi} + \sum^{k-1}_{j=1}b^{(k)}_{j}\xi^{j}\right) ~~{\rm and}~~
\gamma_{l}(\xi)=\sum^{\infty}_{n=1}n^{l} \xi^{n}.
\end{split}
\end{equation}
\end{lemma}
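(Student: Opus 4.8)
The plan is to repeat, at the discrete level, the Laplace-transform derivation that produced the continuous representation \eqref{2.10}, using generating functions (discrete Laplace transforms) in place of Laplace transforms. Since the leading coefficient $q_{0}=\delta(0)^{\gamma}$ of $\overline{D}^{\gamma}_{\tau}$ is positive, \eqref{2.4} determines $\{W^{n}\}$ uniquely and a standard discrete Volterra estimate bounds its growth, so $\widetilde{W}(\xi):=\sum_{n\ge1}W^{n}\xi^{n}$ is analytic for small $|\xi|$ (the sum may start at $n=1$ because $W^{0}=0$). First I would multiply \eqref{2.4} by $\xi^{n}$ and sum over $n\ge1$. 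Because $\overline{D}^{\gamma}_{\tau}W^{n}=\tau^{-\gamma}\sum_{j\ge0}q_{j}W^{n-j}$ is a discrete convolution with $q_{j}=e^{-\sigma j\tau}b_{j}$ and, by \eqref{2.2}, $\sum_{j\ge0}q_{j}\xi^{j}=\delta(e^{-\sigma\tau}\xi)^{\gamma}=\tau^{\gamma}\delta_{\tau}(e^{-\sigma\tau}\xi)^{\gamma}$, the generating function of $\{\overline{D}^{\gamma}_{\tau}W^{n}\}$ equals $\delta_{\tau}(e^{-\sigma\tau}\xi)^{\gamma}\widetilde{W}(\xi)$; substituting $G^{n}=W^{n}+e^{-\sigma n\tau}G(0)$ turns the generating function of $\{AG^{n}\}$ into $A\widetilde{W}(\xi)+AG(0)\,e^{-\sigma\tau}\xi/(1-e^{-\sigma\tau}\xi)$.

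Next I would compute the generating function of the right-hand side of \eqref{2.4} after inserting the Taylor splitting \eqref{2.5}--\eqref{2.6}. For every $n\ge1$ the source $f(t_{n})$ contributes $f(0)\,\xi/(1-\xi)+\sum_{l=1}^{k-2}(\tau^{l}/l!)\,\partial^{l}_{t}f(0)\,\gamma_{l}(\xi)+\widetilde{R}_{k}(\xi)$, using $t_{n}^{l}=\tau^{l}n^{l}$ and the notation $\gamma_{l}(\xi)=\sum_{n\ge1}n^{l}\xi^{n}$, $\widetilde{R}_{k}(\xi)=\sum_{n\ge1}R_{k}(t_{n})\xi^{n}$; the assumptions $f\in C^{k-1}$ and $\int_{0}^{t}(t-s)^{\gamma-1}\|\partial^{l}_{s}f(s)\|_{L^{2}(\Omega)}\,ds<\infty$ are exactly what makes these series (and the ensuing integrals) converge. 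The correction terms, active only for $1\le n\le k-1$, add $-AG^{0}\sum_{n=1}^{k-1}a^{(k)}_{n}(e^{-\sigma\tau}\xi)^{n}$, $f(0)\sum_{n=1}^{k-1}b^{(k)}_{n}\xi^{n}$, and $\sum_{l=1}^{k-2}\tau^{l}\partial^{l}_{t}f(0)\sum_{n=1}^{k-1}d^{(k)}_{l,n}\xi^{n}$. Gathering the $AG^{0}=AG(0)$ contributions together with the term $AG(0)\,e^{-\sigma\tau}\xi/(1-e^{-\sigma\tau}\xi)$ carried over from the left, the definition of $\mu_{1}$ in \eqref{2.14} collapses them to $-AG(0)\,\mu_{1}(e^{-\sigma\tau}\xi)/\delta(e^{-\sigma\tau}\xi)$; in the same way the $f(0)$ contributions combine into $f(0)\,\mu_{2}(\xi)/\delta(\xi)$ and the $\partial^{l}_{t}f(0)$ ones into $\sum_{l}\tau^{l}\partial^{l}_{t}f(0)\big(\gamma_{l}(\xi)/l!+\sum_{j}d^{(k)}_{l,j}\xi^{j}\big)$.

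Then I would solve the algebraic identity $\big(\delta_{\tau}(e^{-\sigma\tau}\xi)^{\gamma}+A\big)\widetilde{W}(\xi)=(\text{right-hand side generating function})$ for $\widetilde{W}$, writing $\big(w^{\gamma}+A\big)^{-1}=wK(w)$ from \eqref{2.11} with $w=\delta_{\tau}(e^{-\sigma\tau}\xi)$; the relation $\delta(\xi)=\tau\delta_{\tau}(\xi)$ makes the leftover $\delta$'s and $1/\delta$'s collapse to powers of $\tau$ and produces the ratio $\delta_{\tau}(e^{-\sigma\tau}\xi)/\delta_{\tau}(\xi)$ in the $f(0)$ term. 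Finally I would recover $W^{n}=\frac{1}{2\pi i}\oint_{|\xi|=\rho}\xi^{-n-1}\widetilde{W}(\xi)\,d\xi$ with $\rho$ small and substitute $\xi=e^{-z\tau}$: since $\xi^{-n-1}\,d\xi=-\tau\,e^{zt_{n}}\,dz$, this supplies exactly the extra factor $\tau$ that raises $\tau^{l}$ to $\tau^{l+1}$ and cancels the remaining $1/\tau$'s, and it replaces the circle $|\xi|=\rho$ by the segment $\{\Re z=\mathrm{const},\ |\Im z|\le\pi/\tau\}$, which I then deform onto $\Gamma^{\tau}_{\theta,\kappa}$ (the horizontal connecting pieces at $\Im z=\pm\pi/\tau$ cancel by the $2\pi i/\tau$-periodicity of the integrand, using $e^{2\pi i n}=1$). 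Reading off the three blocks of terms gives the three integrals in \eqref{2.13}.

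The main obstacle is this last deformation, that is, replacing $\oint_{|\xi|=\rho}$ by $\int_{\Gamma^{\tau}_{\theta,\kappa}}$. It needs the integrands to be analytic in the region swept by the deformation: the truncation of the rays at $r<\pi/(\tau\sin\theta)$ keeps $z\mapsto e^{-(\sigma+z)\tau}$ within $|\arg\xi|<\pi$, so the map does not wind around and no branch of $w\mapsto w^{\gamma}$ is crossed; the only pole of the rational function $\gamma_{l}$ (and of $\xi/(1-\xi)$), at $\xi=1$, i.e.\ $z=0$, is kept outside by the arc of $\Gamma^{\tau}_{\theta,\kappa}$; and one needs the discrete resolvent bound $\|(\delta_{\tau}(e^{-(\sigma+z)\tau})^{\gamma}+A)^{-1}\|\le c\,|\sigma+z|^{-\gamma}$ on and to the right of $\Gamma^{\tau}_{\theta,\kappa}$, the analogue of \eqref{2.11111}. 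This bound, and the admissibility of the angle $\theta$, follow for $k\le6$ from the consistency and $A(\vartheta)$-stability of BDF$k$ --- namely $\delta_{\tau}(e^{-z\tau})=z+\mathcal{O}(\tau^{k}z^{k+1})$ near $z=0$ together with $\delta(e^{-z})$ staying in a fixed sector for $z$ in a sector about $\mathbb{R}_{+}$ --- which ensures $\delta_{\tau}(e^{-(\sigma+z)\tau})^{\gamma}$ stays away from the negative real axis, hence from the spectrum of $-A$, along and beyond the contour.
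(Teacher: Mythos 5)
Your proposal follows essentially the same route as the paper: rewrite the scheme in terms of $W^{n}$, take generating functions (using the discrete-convolution structure of $\overline{D}^{\gamma}_{\tau}$ to produce $\delta^{\gamma}_{\tau}(e^{-\sigma\tau}\xi)\widetilde{W}(\xi)$), solve the resulting resolvent identity for $\widetilde{W}$ via $K$, and recover $W^{n}$ by Cauchy's integral formula with the substitution $\xi=e^{-z\tau}$ followed by deformation onto $\Gamma^{\tau}_{\theta,\kappa}$. The only difference is that you spell out the analyticity and sectorial resolvent bounds justifying the final contour deformation, which the paper dispatches with a bare appeal to Cauchy's theorem; this is a welcome addition rather than a divergence.
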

\begin{proof}
Let $W^{n}=G^{n}-e^{-\sigma n \tau}G(0)$.  From \eqref{2.4}, it holds
\begin{equation}\label{5.1}
\begin{split}
    \overline{D}^{\gamma}_{\tau} W^{n}+ AW^{n}=&-(1+a^{(k)}_{n})e^{-\sigma n \tau}AG^{0}+(1+a^{(k)}_{n})f(0)\\
         &+\sum^{k-2}_{l=1}\left( \frac{t^{l}_{n}}{l!} + d^{(k)}_{l,n}\tau^{l}\right)\partial^{l}_{t}f(0)+R_{k}(t_{n}) \quad 1\le n \le k-1;\\
    \overline{D}^{\gamma}_{\tau} W^{n} + AW^{n}=&-e^{-\sigma n \tau}AG^{0} + f(0)
       + \sum^{k-2}_{l=1} \frac{t^{l}_{n}}{l!} \partial^{l}_{t}f(0)+R_{k}(t_{n})\quad k \le n \le N.
\end{split}
\end{equation}
Multiplying \eqref{5.1} by $\xi^{n}$ and summing over $n$, we have
\begin{equation}\label{5.2}
\begin{split}
& \sum^{\infty}_{n=1} \xi^{n}\left( \overline{D}^{\gamma}_{\tau} W^{n} + AW^{n}\right)
= -\left(\sum^{\infty}_{n=1} \xi^{n} e^{-\sigma n \tau} \!+\! \sum^{k-1}_{j=1} \xi^{j}a^{(k)}_{j}e^{-\sigma j \tau}\right) AG^{0}\\
&   +\left(\sum^{\infty}_{n=1} \xi^{n} \!+\! \sum^{k-1}_{j=1} \xi^{j}b^{(k)}_{j}\right)f(0)
  +\sum^{k-2}_{l=1}\left( \sum^{\infty}_{n=1} \xi^{n}\frac{t^{l}_{n}}{l!} + \sum^{k-1}_{j=1} \xi^{j}d^{(k)}_{l,n}\tau^{l}\right)\partial^{l}_{t}f(0)
   + \widetilde{R}_{k}(\xi) \\
&= -\left( \frac{\xi e^{-\sigma \tau}}{1-\xi e^{-\sigma \tau}} + \sum^{k-1}_{j=1} \xi^{j}a^{(k)}_{j}e^{-\sigma j \tau}\right) AG^{0}
   +\left(\frac{\xi }{1-\xi} + \sum^{k-1}_{j=1} \xi^{j}b^{(k)}_{j}\right)f(0) \\
 & +\sum^{k-2}_{l=1}\left(\frac{\gamma_{l}(\xi)}{l!} +\sum^{k-1}_{j=1} \xi^{j}d^{(k)}_{l,n}\right)\tau^{l}\partial^{l}_{t}f(0) + \widetilde{R}_{k}(\xi),
\end{split}
\end{equation}
where  $\widetilde{R}_{k}(\xi)= \sum^{\infty}_{n=1} \xi^{n} R_{k}(t_{n}) $ and $\gamma_{l}(\xi)=\sum^{\infty}_{n=1}n^{l} \xi^{n}$. Here we use the property  $\sum^{\infty}_{n=1} \xi^{n} = \frac{\xi}{1-\xi}$.
Since
\begin{equation}\label{5.3}
\begin{split}
 &\sum^{\infty}_{n=1} \xi^{n} \overline{D}^{\gamma}_{\tau} W^{n}
 = \sum^{\infty}_{n=1} \xi^{n} \frac{1}{\tau^{\gamma}} \sum^{n}_{j=1} q_{n-j} W^{j}
 = \sum^{\infty}_{j=1} \sum^{\infty}_{n=j}\xi^{n} \frac{1}{\tau^{\gamma}} q_{n-j} W^{j} \\
 =& \sum^{\infty}_{j=1} \sum^{\infty}_{n=0}\xi^{n+j} \frac{1}{\tau^{\gamma}} q_{n} W^{j}
 = \frac{1}{\tau^{\gamma}}\sum^{\infty}_{n=0}\xi^{n} q_{n}\sum^{\infty}_{j=1} \xi^{j} W^{j}
 =\delta^{\gamma}_{\tau}(e^{-\sigma\tau}\xi) \widetilde{W}(\xi)
\end{split}
\end{equation}
with $W^{0}=0$ and $\sum^{\infty}_{n=1} \xi^{n}AW^{n}=A\widetilde{W}(\xi)$. We obtain
\begin{equation}\label{5.4}
\begin{split}
\widetilde{W}(\xi)
& = K\left(\delta_{\tau}(e^{-\sigma\tau}\xi)\right)\delta_{\tau}(e^{-\sigma\tau}\xi)
\left[ - \left( \frac{ e^{-\sigma \tau}\xi}{1- e^{-\sigma \tau}\xi} \!+\! \sum^{k-1}_{j=1}a^{(k)}_{j}e^{-\sigma j \tau}\xi^{j}\right) AG^{0}
    \right. \\
& \quad\left.+\left(\frac{\xi }{1-\xi} \!+\! \sum^{k-1}_{j=1} b^{(k)}_{j}\xi^{j}\right)f(0)+ \sum^{k-2}_{l=1}\left(\frac{\gamma_{l}(\xi)}{l!} +\sum^{k-1}_{j=1}d^{(k)}_{l,n}\xi^{j}\right)\tau^{l}\partial^{l}_{t}f(0) + \widetilde{R}_{k}(\xi) \right]\\
&= K\left(\delta_{\tau}(e^{-\sigma\tau}\xi)\right)
\left[ -\tau^{-1}\mu_{1}(e^{-\sigma\tau}\xi) AG^{0}
   +\tau^{-1}\frac{\delta_{\tau}(e^{-\sigma\tau}\xi)}{\delta_{\tau}(\xi)}\mu_{2}(\xi)f(0)
    \right. \\
& \quad+\left. \delta_{\tau}(e^{-\sigma\tau}\xi)\sum^{k-2}_{l=1}\left(\frac{\gamma_{l}(\xi)}{l!} +\sum^{k-1}_{j=1}d^{(k)}_{l,n}\xi^{j}\right)\tau^{l}\partial^{l}_{t}f(0) + \delta_{\tau}(e^{-\sigma\tau}\xi)\widetilde{R}_{k}(\xi) \right],
\end{split}
\end{equation}
where $K$ is given by \eqref{2.11} and
\begin{equation}\label{5.5}
\begin{split}
\mu_{1}(e^{-\sigma\tau}\xi)&=\delta(e^{-\sigma\tau}\xi) \left( \frac{ e^{-\sigma \tau}\xi}{1- e^{-\sigma \tau}\xi} + \sum^{k-1}_{j=1}a^{(k)}_{j}e^{-\sigma j \tau}\xi^{j}\right),\\
\mu_{2}(\xi)&=\delta(\xi) \left( \frac{ \xi}{1- \xi} + \sum^{k-1}_{j=1}b^{(k)}_{j}\xi^{j}\right).
\end{split}
\end{equation}
According to  Cauchy's integral formula, and the change of variables $\xi=e^{-z\tau}$, and  Cauchy's theorem of complex analysis, we have
\begin{equation}\label{5.6}
\begin{split}
W^{n}
&=\frac{1}{2\pi i}\int_{|\xi|=\varrho_{\kappa}}{\xi^{-n-1} \widetilde{W}(\xi)} d\xi
=\frac{1}{2\pi i}\int_{\Gamma^{\tau}}{e^{t_{n}z}\tau \widetilde{W}(e^{-z\tau})} dz\\
&=\frac{1}{2\pi i}\int_{\Gamma^{\tau}_{\theta,\kappa}}{e^{t_{n}z} \tau\widetilde{W}(e^{-z\tau})} dz
\end{split}
\end{equation}
with the Bromwich contours \cite{Podlubny:99}
\begin{equation*}\label{2.1201}
\Gamma^{\tau}= \{z=\kappa+1+iy: y\in \mathbb{R} \quad{\rm and}\quad |y|\le \pi/\tau \}.
\end{equation*}
The proof is completed.
\end{proof}

\section{Convergence analysis}
In this section,  based on the idea of \cite{JLZ:17}, we provided  the detailed convergence analysis of the correction BDF$k$ for \eqref{1.1} with L\'{e}vy flight.
\subsection{A few technical Lemmas}
We first introduce   a few technical lemmas.
\begin{lemma}\label{Lemma 2.2}
Let $\delta(\xi)$ is given by \eqref{2.2} for $1\le k \le 6$. Then
\begin{equation*}
 \delta(e^{-y})=y-\frac{1}{k+1}y^{k+1}+\mathcal{O}(y^{k+2}).
\end{equation*}
\end{lemma}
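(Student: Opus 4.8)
The plan is to establish the asymptotic expansion $\delta(e^{-y}) = y - \frac{1}{k+1}y^{k+1} + \mathcal{O}(y^{k+2})$ directly from the definition $\delta(\xi) = \sum_{j=1}^{k}\frac{1}{j}(1-\xi)^{j}$. The starting point is the classical observation that the generating function of the BDF$k$ method is a truncation of $-\log\xi$. Indeed, setting $\xi = e^{-y}$, one has $1 - \xi = 1 - e^{-y}$, and the full series $\sum_{j=1}^{\infty}\frac{1}{j}(1-\xi)^{j} = -\log\xi = y$. So $\delta(e^{-y})$ is the partial sum of a series whose exact value is $y$, and the task reduces to estimating the tail.

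First I would substitute $\xi = e^{-y}$ and write $\delta(e^{-y}) = \sum_{j=1}^{k}\frac{1}{j}(1-e^{-y})^{j}$. Then I would use the identity $-\log(e^{-y}) = y = \sum_{j=1}^{\infty}\frac{1}{j}(1-e^{-y})^{j}$, valid for $y$ in a neighborhood of $0$ (where $|1-e^{-y}|<1$), to obtain
\begin{equation*}
\delta(e^{-y}) = y - \sum_{j=k+1}^{\infty}\frac{1}{j}(1-e^{-y})^{j}.
\end{equation*}
The next step is to analyze the tail $T(y) := \sum_{j=k+1}^{\infty}\frac{1}{j}(1-e^{-y})^{j}$. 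Since $1 - e^{-y} = y + \mathcal{O}(y^2) = y(1 + \mathcal{O}(y))$ as $y\to 0$, the lowest-order term in the tail comes from $j = k+1$: it is $\frac{1}{k+1}(1-e^{-y})^{k+1} = \frac{1}{k+1}\big(y + \mathcal{O}(y^2)\big)^{k+1} = \frac{1}{k+1}y^{k+1} + \mathcal{O}(y^{k+2})$. All remaining terms $j \ge k+2$ contribute $(1-e^{-y})^{j} = \mathcal{O}(y^{j}) = \mathcal{O}(y^{k+2})$, and the geometric-type bound $\sum_{j\ge k+2}\frac{1}{j}|1-e^{-y}|^{j} \le C|1-e^{-y}|^{k+2}$ for $|1-e^{-y}| \le 1/2$ shows the whole sub-tail is $\mathcal{O}(y^{k+2})$. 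Collecting terms gives $\delta(e^{-y}) = y - \frac{1}{k+1}y^{k+1} + \mathcal{O}(y^{k+2})$.

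I do not anticipate a serious obstacle here; this is essentially a bookkeeping argument about truncating the logarithm series. The one point requiring a little care is justifying that the tail's leading behavior is exactly $\frac{1}{k+1}y^{k+1}$ and that no intermediate cancellation or lower-order contamination occurs — this follows because $(1-e^{-y})^{j}$ has a zero of order exactly $j$ at $y=0$, so the terms are ordered strictly by $j$. One should also note the expansion is a formal/analytic identity valid for small $y$, which is all that is needed since the contour arguments elsewhere only use $\delta$ near $\xi = 1$ (equivalently $y$ near $0$); I would state this restriction explicitly. An alternative, equally clean route is to differentiate: $\frac{d}{d\xi}\delta(\xi) = -\sum_{j=1}^{k}(1-\xi)^{j-1} = -\frac{1-(1-\xi)^{k}}{\xi}$, whose antiderivative differs from $-\log\xi$ by a term of order $(1-\xi)^{k+1}$; but the direct tail estimate above is the most transparent, so that is the one I would write up.
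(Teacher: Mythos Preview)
Your argument is correct and in fact more elegant than the paper's own proof. The paper proceeds by brute-force case analysis: for each $k=1,\ldots,6$ it writes $\delta(e^{-y})$ explicitly as a linear combination of $e^{-jy}$, substitutes the Taylor expansion of each exponential, and checks by direct computation that the result is $y-\frac{1}{k+1}y^{k+1}+\mathcal{O}(y^{k+2})$. Your route instead exploits the structural fact that $\delta(\xi)$ is the $k$-th partial sum of the Taylor series of $-\log\xi$ about $\xi=1$, so that $\delta(e^{-y})=y-\sum_{j\ge k+1}\frac{1}{j}(1-e^{-y})^{j}$; the tail analysis is then immediate. This buys you a uniform proof valid for all $k\ge 1$ simultaneously (not just $k\le 6$) and explains conceptually \emph{why} the leading error coefficient is $\frac{1}{k+1}$, whereas the paper's approach merely verifies it. The paper's version has the minor advantage of being fully explicit and mechanically checkable, but your argument is the cleaner one to write up.
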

\begin{proof}
According to \eqref{2.2} and the Taylor series expansion $e^{-y}=\sum^{\infty}_{n=0} {\frac{(-y)^{n}}{n!}}=\sum^{k+1}_{n=0} {\frac{(-y)^{n}}{n!}}+\mathcal{O}(y^{k+2}) $, we have
\begin{equation*}
\begin{split}
\delta(e^{-y})
&= 1-e^{-y} = y-\frac{1}{2}y^2+\mathcal{O}(y^3), \quad k=1;\\
\delta(e^{-y})
&= \frac{3}{2}\left( 1-\frac{4}{3}e^{-y}+\frac{1}{3}e^{-(2y)} \right)= y-\frac{1}{3}y^3+\mathcal{O}(y^4), \quad k=2;\\
\delta(e^{-y})
&= \frac{11}{6}\left( 1-\frac{18}{11}e^{-y}+\frac{9}{11}e^{-2y} -\frac{2}{11}e^{-3y}\right)= y-\frac{1}{4}y^4+\mathcal{O}(y^5), \quad k=3;\\
\delta(e^{-y})
&= \frac{25}{12}\left( 1-\frac{48}{25}e^{-y}+\frac{36}{25}e^{-2y}-\frac{16}{25}e^{-3y}+\frac{3}{25}e^{-4y} \right)
= y-\frac{1}{5}y^5+\mathcal{O}(y^6), ~~ k=4;\\
\delta(e^{-y})
&= \frac{137}{60}\left( 1-\frac{300}{137}e^{-y}+\frac{300}{137}e^{-2y}-\frac{200}{137}e^{-3y}+\frac{75}{137}e^{-4y}-\frac{12}{137}e^{-5y} \right)\\
&= y-\frac{1}{6}y^6+\mathcal{O}(y^7), \quad k=5;\\
\delta(e^{-y})
&= \frac{147}{60}\left( 1-\frac{360}{147}e^{-y}+\frac{450}{147}e^{-2y}-\frac{400}{147}e^{-3y}+\frac{225}{147}e^{-4y}-\frac{72}{147}e^{-5y}+\frac{10}{147}e^{-6y} \right)\\
&= y-\frac{1}{7}y^7+\mathcal{O}(y^8), \quad k=6.
\end{split}
\end{equation*}
The proof is completed.
\end{proof}

\begin{lemma}\label{Lemma 2.3}
Let $\delta_{\tau}(\xi)$ be given by \eqref{2.2} for $1\le k \le 6$. Then there  exist the positive constants $c_{1},c_{2}$ and $c$ such that
\begin{equation*}
\begin{split}
c_{1}|z|\le |\delta_{\tau}(e^{-z\tau})| \le c_{2}|z| \quad {\rm and} \quad |\delta^{\gamma}_{\tau}(e^{-z\tau})-z^{\gamma}|\le c \tau^{k}|z|^{k+\gamma}
\quad{\forall} z\in \Gamma^{\tau}_{\theta,\kappa}.
\end{split}
\end{equation*}
\end{lemma}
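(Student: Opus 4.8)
The plan is to treat the two assertions separately, exploiting the fact that $\delta_\tau(\xi)=\tfrac1\tau\delta(\xi)$ so that everything reduces to properties of the scalar symbol $\delta(e^{-y})$ with $y=z\tau$, already expanded in Lemma~\ref{Lemma 2.2}. Throughout, $z\in\Gamma^\tau_{\theta,\kappa}$ means $|z|$ ranges over $[\kappa,\pi/(\tau\sin\theta))$ and $|\arg z|\le\theta$ on the rays; the key geometric consequence is that $y=z\tau$ stays in a \emph{fixed} bounded sector $\{|\arg y|\le\theta,\ |y|\le \pi/\sin\theta\}$ independent of $\tau$, so all $\mathcal{O}(\cdot)$ constants coming from Taylor expansion are uniform in $\tau$.

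First I would prove the two-sided bound $c_1|z|\le|\delta_\tau(e^{-z\tau})|\le c_2|z|$. Writing $\delta_\tau(e^{-z\tau})=\tfrac1\tau\delta(e^{-y})$ with $y=z\tau$, it suffices to show $c_1|y|\le|\delta(e^{-y})|\le c_2|y|$ uniformly on the sector $S_\theta=\{0<|y|\le\pi/\sin\theta,\ |\arg y|\le\theta\}$. For the upper bound, Lemma~\ref{Lemma 2.2} gives $\delta(e^{-y})=y(1+\mathcal{O}(y^k))$, so $|\delta(e^{-y})|\le c_2|y|$ on a small disc, and on the compact complement of that disc within $\bar S_\theta$ one uses that $\delta(e^{-y})/y$ is continuous and nonvanishing — the nonvanishing of $\delta(e^{-y})$ off the origin in the sector is the classical $A(\theta)$-stability / zero-stability property of BDF$k$ for $k\le6$, which guarantees $\delta(\xi)\ne0$ for $|\xi|\le1$, $\xi\ne1$ (this is where $k\le6$ is essential and I would cite the standard Lubich reference already used in the paper). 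The lower bound $|\delta(e^{-y})|\ge c_1|y|$ follows the same way: near $0$ from the expansion, and away from $0$ from continuity and nonvanishing, after checking $\delta(e^{-y})/y\to1$ as $y\to0$ so the ratio extends continuously and is bounded below on the compact set.

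Second, for $|\delta^\gamma_\tau(e^{-z\tau})-z^\gamma|\le c\tau^k|z|^{k+\gamma}$, I would again substitute $y=z\tau$ and reduce to showing $|\,\delta(e^{-y})^\gamma-y^\gamma\,|\le c|y|^{k+\gamma}$ on $S_\theta$; multiplying by $\tau^{-\gamma}$ then yields $\tau^{-\gamma}|\delta(e^{-y})^\gamma - y^\gamma| = |\delta^\gamma_\tau(e^{-z\tau}) - z^\gamma|\le c\tau^{-\gamma}|y|^{k+\gamma}=c\tau^k|z|^{k+\gamma}$. To get the scalar estimate, write $\delta(e^{-y})^\gamma=y^\gamma\bigl(1+w\bigr)^\gamma$ where, by Lemma~\ref{Lemma 2.2}, $w:=\delta(e^{-y})/y-1=-\tfrac{1}{k+1}y^k+\mathcal{O}(y^{k+1})=\mathcal{O}(y^k)$. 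On the part of $S_\theta$ where $|y|$ is small enough that $|w|\le\tfrac12$, the principal branch satisfies $|(1+w)^\gamma-1|\le c|w|\le c|y|^k$, hence $|\delta(e^{-y})^\gamma-y^\gamma|\le c|y|^{\gamma}\cdot|y|^k$; on the remaining compact piece of $\bar S_\theta$ (bounded away from $0$ and from the origin's branch cut, using the nonvanishing of $\delta(e^{-y})$ just established so the power is well defined), both $\delta(e^{-y})^\gamma$ and $y^\gamma$ are bounded, so the difference is trivially $\le c\le c|y|^{k+\gamma}$ since $|y|$ is bounded below there.

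The main obstacle is the uniformity: one must argue carefully that choosing the branch of $(\cdot)^\gamma$ consistently, and that the ``small $|y|$'' threshold below which $|w|\le\tfrac12$, can both be taken independent of $\tau$. This is exactly where the restriction to the fixed sector $S_\theta$ (rather than an all-of-$\mathbb{C}$ line) pays off, together with the BDF$k$ zero-stability ($k\le6$) ensuring $\delta(e^{-y})$ avoids the negative real axis and the origin away from $y=0$, so that $\delta(e^{-y})^\gamma$ is analytic and nonvanishing on $\bar S_\theta\setminus\{0\}$; a standard compactness argument on $\bar S_\theta\setminus\{|y|<\rho\}$ then finishes both estimates with constants depending only on $\theta$, $\kappa$, $\gamma$, and $k$.
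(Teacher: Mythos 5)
Your proposal is correct and follows essentially the same route as the paper: both parts rest on the expansion $\delta(e^{-y})=y-\tfrac{1}{k+1}y^{k+1}+\mathcal{O}(y^{k+2})$ from Lemma~\ref{Lemma 2.2} together with the equivalence $|\delta_\tau(e^{-z\tau})|\sim|z|$, the only cosmetic difference being that you bound $|\delta(e^{-y})^\gamma-y^\gamma|$ by factoring out $y^\gamma$ and estimating $|(1+w)^\gamma-1|\le c|w|$, whereas the paper uses the equivalent identity $\delta_\tau^\gamma(e^{-z\tau})-z^\gamma=\gamma\int_{z}^{\delta_\tau(e^{-z\tau})}\zeta^{\gamma-1}\,d\zeta$ and bounds $\max_\zeta|\zeta|^{\gamma-1}$ along the segment. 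Your write-up is in fact more explicit than the paper's about the compactness and BDF$k$ nonvanishing argument needed away from $y=0$ (which the paper leaves implicit by citing \cite{JLZ:17}), so no gap.
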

\begin{proof}
From  Lemma \ref{Lemma 2.2}, we have $c_{1}|z|\le |\delta_{\tau}(e^{-z\tau})| \le c_{2}|z|$.
On the other hand, according to  (39) of  \cite{JLZ:17} and Lemma \ref{Lemma 2.2}, we have
\begin{equation*}
|\delta^{\gamma}_{\tau}(e^{-z\tau})-z^{\gamma}| = \gamma \left| \int^{\delta_{\tau}(e^{-z\tau})}_{z} { \zeta^{\gamma-1} } d\zeta\right|
\leq \max_{\zeta} |\zeta|^{\gamma-1} \left| \delta_{\tau}(e^{-z\tau})-z  \right|
\leq c \tau^{k}|z|^{k+\gamma}.
\end{equation*}
The proof is completed.
\end{proof}


\begin{lemma}\label{Lemma 2.5}
Let $\delta_{\tau}$ and $K$ be given by \eqref{2.2} and \eqref{2.11}, respectively. Then there exist a  positive constants $c$ such that
\begin{equation*}
\begin{split}
\left\|K(\delta_{\tau}(e^{-(\sigma+z)\tau}))-K(\sigma+z)\right\|\leq c\tau^{k}|\sigma+z|^{k-1-\gamma} \leq c\tau^{k}|z|^{k-1-\gamma},
\end{split}
\end{equation*}
where $\kappa$ in \eqref{2.a10} is large enough compared with $\sigma$.
\end{lemma}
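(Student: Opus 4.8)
The plan is to write $K(\delta_\tau(e^{-(\sigma+z)\tau})) - K(\sigma+z)$ as an integral of the derivative $K'$ along the segment joining $\delta_\tau(e^{-(\sigma+z)\tau})$ and $\sigma+z$, exactly mimicking the argument used for Lemma \ref{Lemma 2.3}. Concretely, since $K(w) = (w^\gamma + A)^{-1} w^{-1}$, I would compute
\[
K'(w) = -(w^\gamma+A)^{-1}\left(\gamma w^{\gamma-1}\right)(w^\gamma+A)^{-1}w^{-1} - (w^\gamma+A)^{-1}w^{-2},
\]
and then use the resolvent bound $\|(w^\gamma+A)^{-1}\| \le c|w|^{-\gamma}$ from \eqref{2.11111} to get $\|K'(w)\| \le c|w|^{-\gamma-2}$ uniformly for $w$ in a sector containing both endpoints. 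Writing
\[
K(\delta_\tau(e^{-(\sigma+z)\tau})) - K(\sigma+z) = \int_{\sigma+z}^{\delta_\tau(e^{-(\sigma+z)\tau})} K'(\zeta)\,d\zeta,
\]
one bounds the integral by $\max_\zeta \|K'(\zeta)\|$ times the chord length $|\delta_\tau(e^{-(\sigma+z)\tau}) - (\sigma+z)|$.

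For the chord length, I would invoke Lemma \ref{Lemma 2.3} with $\sigma+z$ in place of $z$: since $\delta_\tau(\xi)=\tau^{-1}\delta(\xi)$ and $\delta(e^{-y}) = y - \frac{1}{k+1}y^{k+1} + \mathcal{O}(y^{k+2})$ by Lemma \ref{Lemma 2.2}, we have $|\delta_\tau(e^{-(\sigma+z)\tau}) - (\sigma+z)| \le c\tau^k|\sigma+z|^{k+1}$. This requires $(\sigma+z)\tau$ to stay bounded, which is exactly why the hypothesis ``$\kappa$ large enough compared with $\sigma$'' is imposed: for $z \in \Gamma^\tau_{\theta,\kappa}$ we have $|z|\tau \lesssim 1$ on the nonradial part and $|z| = \kappa$ on the arc, so taking $\kappa \gg \sigma$ keeps $|\sigma+z| \sim |z|$ and $|\sigma+z|\tau$ bounded, and also keeps $\sigma+z$ in the sector where the resolvent estimate holds. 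Combining, the midpoint $\zeta$ on the segment also satisfies $|\zeta| \sim |\sigma+z| \sim |z|$ (using $c_1|z| \le |\delta_\tau(e^{-z\tau})| \le c_2|z|$ and the same bound shifted by $\sigma$), so $\max_\zeta\|K'(\zeta)\| \le c|z|^{-\gamma-2}$. Multiplying the two bounds gives
\[
\left\|K(\delta_\tau(e^{-(\sigma+z)\tau}))-K(\sigma+z)\right\| \le c\,|z|^{-\gamma-2}\cdot\tau^k|z|^{k+1} = c\tau^k|z|^{k-1-\gamma},
\]
and the first inequality $c\tau^k|\sigma+z|^{k-1-\gamma}$ follows the same way before the final substitution $|\sigma+z|\sim|z|$.

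The main obstacle I anticipate is not the algebra but making the geometry rigorous: I must verify that the \emph{entire} segment from $\sigma+z$ to $\delta_\tau(e^{-(\sigma+z)\tau})$ lies inside a sector $|\arg\zeta| \le \theta'$ (with $\theta' < \pi$ suitable for the resolvent bound) and stays bounded away from the spectrum of $-A$ on the negative real axis, so that $\|K'(\zeta)\|\le c|\zeta|^{-\gamma-2}$ is valid along the whole path, and that $|\zeta|$ is comparable to $|z|$ there. Both facts hinge on the perturbation $\delta_\tau(e^{-(\sigma+z)\tau}) - (\sigma+z)$ being of relative size $\mathcal{O}(\tau^k|z|^k) = \mathcal{O}((\tau|z|)^k) \le \mathcal{O}(1)$ small — more precisely, choosing $\kappa$ and the opening angle so that this relative perturbation is $\le 1/2$, which is where ``$\kappa$ large enough'' and the standard restriction $\theta \in (\pi/2,\pi)$ on the contour do the work. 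Once that uniform sector containment is established the rest is a one-line estimate.
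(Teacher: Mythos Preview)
Your approach is correct but differs from the paper's. The paper does not integrate $K'$ along a segment; instead it uses a direct add-and-subtract decomposition
\[
K(\delta_\tau)-K(\sigma+z)=\bigl[\delta_\tau^{-1}-(\sigma+z)^{-1}\bigr](\delta_\tau^\gamma+A)^{-1}+(\sigma+z)^{-1}\bigl[(\delta_\tau^\gamma+A)^{-1}-((\sigma+z)^\gamma+A)^{-1}\bigr],
\]
handles the second bracket via the resolvent identity $(B+A)^{-1}-(C+A)^{-1}=(C-B)(B+A)^{-1}(C+A)^{-1}$, and then invokes the pointwise bounds $\|(\delta_\tau^\gamma+A)^{-1}\|\le c|\sigma+z|^{-\gamma}$ and $|\delta_\tau^\gamma-(\sigma+z)^\gamma|\le c\tau^k|\sigma+z|^{k+\gamma}$ from Lemma~\ref{Lemma 2.3}. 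The advantage of the paper's route is that it only needs the resolvent estimate at the two endpoints $\sigma+z$ and $\delta_\tau(e^{-(\sigma+z)\tau})$, so it entirely avoids the ``main obstacle'' you flag: there is no segment whose sector-containment must be verified. Your derivative approach is equally valid and has the aesthetic merit of mirroring the proof of Lemma~\ref{Lemma 2.3} exactly, but it does carry the extra (routine) burden of checking that every $\zeta$ on the chord stays in the sector and has $|\zeta|\sim|z|$; the paper's algebraic splitting is in this sense slightly more economical.
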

\begin{proof}
Using  the triangle inequality and \eqref{2.11111}, we have
\begin{equation*}
\begin{split}
&\left\|K(\delta_{\tau}(e^{-(\sigma+z)\tau}))-K(\sigma+z)\right\|\\
=&\left\|(\delta_{\tau}(e^{-(\sigma+z)\tau}))^{-1}(\delta^{\gamma}_{\tau}(e^{-(\sigma+z)\tau})+A)^{-1}
-(\sigma+z)^{-1}\left((\sigma+z)^{\gamma}+A\right)^{-1}\right\|\\
\leq&\left|\left((\delta_{\tau}(e^{-(\sigma+z)\tau}))^{-1}-(\sigma+z)^{-1}\right)\right|\left\|(\delta^{\gamma}_{\tau}(e^{-(\sigma+z)\tau})+A)^{-1}\right\|\\
&+\left|(\sigma+z)^{-1}\right|\left\|\left(\delta^{\gamma}_{\tau}(e^{-(\sigma+z)\tau})+A\right)^{-1}-\left((\sigma+z)^{\gamma}+A\right)^{-1}\right\|\\
\leq& c\tau^{k}|\sigma+z|^{k-1-\gamma}.
\end{split}
\end{equation*}
Here we use \cite{JLZ:17}
\begin{equation*}
\begin{split}
\Big\|\left(\delta^{\gamma}_{\tau}(e^{-(\sigma+z)\tau})+A\right)^{-1}\Big\|\leq c|\sigma+z|^{-\gamma}
\end{split}
\end{equation*}
and
\begin{equation*}
\begin{split}
&\left(\delta^{\gamma}_{\tau}(e^{-(\sigma+z)\tau})+A\right)^{-1}-\left((\sigma+z)^{\gamma}+A\right)^{-1}\\
=&\left((\sigma+z)^{\gamma}-\delta^{\gamma}_{\tau}(e^{-(\sigma+z)\tau})\right)\left(\delta^{\gamma}_{\tau}(e^{-(\sigma+z)\tau})+A\right)^{-1}
\left((\sigma+z)^{\gamma}+A\right)^{-1}.
\end{split}
\end{equation*}
The proof is completed.
\end{proof}


\begin{lemma}\label{Lemma 2.0001}
Let $\delta_{\tau}$ be given by \eqref{2.2} with $1\le k \le 6$ and $\mu_{1}(\xi),\mu_{2}(\xi),\gamma_{l}(\xi)$ be given by \eqref{2.14}. Let $a^{(k)}_{j}$, $b^{(k)}_{j}$ and $d^{(k)}_{l,j}$ be given in  Table \ref{table:1} and Table \ref{table:2}. Then
\begin{equation*}
\begin{split}
& \left|\delta_{\tau}(e^{-(\sigma+z)\tau})-(\sigma+z)\right|\le c \tau^{k}\left|\sigma+z\right|^{k+1} \le c \tau^{k}\left|z\right|^{k+1},\\
& \left|\mu_{1}(e^{-(\sigma+z)\tau})-1\right| \le c \tau^{k}\left|\sigma+z\right|^{k} \le c \tau^{k}\left|z\right|^{k},\quad
  \left|\mu_{2}(e^{-z\tau})-1\right| \le c \tau^{k}\left|z\right|^{k},\\
& \left| \left(\frac{\gamma_{l}(e^{-z\tau})}{l!} +\sum^{k-1}_{j=1}d^{(k)}_{l,j}e^{-z j\tau}\right)\tau^{l+1} - \frac{1}{z^{l+1}} \right|
  \le c \tau^{k}\left|\sigma+z\right|^{k-l-1} \le c \tau^{k}\left|z\right|^{k-l-1},
\end{split}
\end{equation*}
where $\kappa$ in \eqref{2.a10} is large enough compared with $\sigma$.
\end{lemma}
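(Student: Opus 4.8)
The plan is to establish each of the four estimates in Lemma~\ref{Lemma 2.0001} by reducing it to a statement about the generating functions appearing in \eqref{2.14}, expanded at $\xi = e^{-w\tau}$ for $w = \sigma+z$ (or $w=z$), and then invoking Lemma~\ref{Lemma 2.2} together with the curve geometry of $\Gamma^{\tau}_{\theta,\kappa}$ (on which $|w\tau|$ is bounded away from infinity when $\kappa \gg \sigma$, so all the $\mathcal{O}$-terms are uniform). The first estimate is essentially Lemma~\ref{Lemma 2.3} restated with $z$ replaced by $\sigma+z$: since $\delta_\tau(e^{-w\tau}) = \tau^{-1}\delta(e^{-w\tau})$ and, by Lemma~\ref{Lemma 2.2}, $\delta(e^{-w\tau}) = w\tau - \frac{1}{k+1}(w\tau)^{k+1} + \mathcal{O}((w\tau)^{k+2})$, we get $|\delta_\tau(e^{-w\tau}) - w| \le c\tau^k|w|^{k+1}$; then bound $|\sigma+z| \le c|z|$ on $\Gamma^{\tau}_{\theta,\kappa}$ for $\kappa$ large enough relative to $\sigma$.

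For the $\mu_1$ and $\mu_2$ estimates, the key algebraic point is that the correction coefficients $a^{(k)}_j$, $b^{(k)}_j$ in Table~\ref{table:1} are chosen precisely so that $\mu_i(e^{-w\tau}) - 1 = \mathcal{O}((w\tau)^k)$. Concretely I would write, using $\mu_1(\xi) = \delta(\xi)\bigl(\frac{\xi}{1-\xi} + \sum_{j=1}^{k-1} a^{(k)}_j \xi^j\bigr)$ and the expansion $\frac{\xi}{1-\xi} = \frac{e^{-w\tau}}{1-e^{-w\tau}} = \frac{1}{w\tau} - \frac12 + \mathcal{O}(w\tau)$ together with $\delta(e^{-w\tau}) = w\tau(1 + \mathcal{O}((w\tau)^k))$, that the product telescopes to $1 + (\text{polynomial in } w\tau \text{ of degree} < k \text{ with vanishing coefficients}) + \mathcal{O}((w\tau)^k)$. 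The vanishing of those intermediate coefficients is exactly the defining property of the correction weights (this is the analog of the identity used in \cite{JLZ:17}); having fixed $a^{(k)}_j$, $b^{(k)}_j$ as tabulated, one verifies it by matching Taylor coefficients of $\delta(e^{-y})$ from Lemma~\ref{Lemma 2.2} against the partial-fraction expansion — a finite check for each $k\le 6$. Then $|\mu_i(e^{-w\tau}) - 1| \le c|w\tau|^k = c\tau^k|w|^k \le c\tau^k|z|^k$.

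For the last estimate I would use that $\gamma_l(\xi) = \sum_{n\ge 1} n^l \xi^n$ is a rational function with a pole of order $l+1$ at $\xi=1$, namely $\gamma_l(e^{-w\tau}) = \frac{l!}{(w\tau)^{l+1}} + (\text{lower-order Laurent terms in } w\tau)$; multiplying by $\tau^{l+1}/l!$ gives $\frac{\gamma_l(e^{-w\tau})}{l!}\tau^{l+1} = \frac{1}{w^{l+1}} + \tau(\cdots)$, and the finite sum $\sum_{j=1}^{k-1} d^{(k)}_{l,j} e^{-wj\tau}\,\tau^{l+1}$ is designed (via Table~\ref{table:2}) to cancel the offending Laurent/Taylor terms down to order $\tau^k|w|^{k-l-1}$. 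Again this is a coefficient-matching verification for each admissible pair $(k,l)$ with $1\le l \le k-2$, $k\le 6$; then convert $|w| = |\sigma+z|$ to $|z|$ as before. I expect the main obstacle to be the bookkeeping in this last estimate: one must carefully expand the order-$(l+1)$ pole of $\gamma_l$, keep track of which powers of $w\tau$ survive, and confirm that the tabulated $d^{(k)}_{l,j}$ indeed annihilate all powers below $w^{k-l-1}$ — the algebra is routine in principle but delicate, and it is where an error in the tables would surface. A clean way to organize it is to note that $\bigl(\frac{\gamma_l(\xi)}{l!} + \sum_j d^{(k)}_{l,j}\xi^j\bigr)$ should equal the generating function whose $n$-th coefficient is the quadrature error of approximating $\int_0^{t_n}(t_n-s)^{\gamma-1} s^l\,ds$-type terms, so that its deviation from $z^{-(l+1)}\cdot(\text{stuff})$ is governed directly by Lemma~\ref{Lemma 2.2}; I would phrase the proof through that identity rather than brute Laurent expansion wherever possible.
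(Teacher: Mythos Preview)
Your approach is correct and essentially coincides with the paper's: the paper proves the first estimate directly from Lemma~\ref{Lemma 2.2} exactly as you do, and for the remaining three inequalities it simply writes ``similar arguments can be performed as in \cite{JLZ:17}, we omit it here.'' What you have sketched is precisely the content of that reference---Taylor/Laurent expansion of $\mu_i$ and of $\gamma_l$ about $\xi=1$, together with the observation that the tabulated $a^{(k)}_j$, $b^{(k)}_j$, $d^{(k)}_{l,j}$ are \emph{defined} by the coefficient-matching conditions that kill the sub-$k$th-order terms---so your outline is actually more explicit than the paper's proof.

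One small slip to correct: in the last estimate the argument of $\gamma_l$ and of the correction sum is $e^{-z\tau}$, not $e^{-(\sigma+z)\tau}$, so you should set $w=z$ there rather than $w=\sigma+z$; the bound $c\tau^k|z|^{k-l-1}$ then comes out directly, and the intermediate bound $c\tau^k|\sigma+z|^{k-l-1}$ in the statement is redundant (it follows trivially since $k-l-1\ge 0$ and $|\sigma+z|\le c|z|$ on the contour). This does not affect your argument, only the phrasing of the final line.
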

\begin{proof}
From  Lemma \ref{Lemma 2.2}, infer that
\begin{equation*}
\left|\delta_{\tau}(e^{-(\sigma+z)\tau})-(\sigma+z)\right| \le c \tau^{k}\left|\sigma+z\right|^{k+1} \le c \tau^{k}\left|z\right|^{k+1}.
\end{equation*}
The others  inequality similar arguments can be performed as in \cite{JLZ:17}, we omit it here.
\end{proof}

\begin{lemma}\label{Lemma 2.0002}
Let $\delta_{\tau}(\xi)$ be  given by \eqref{2.2} with $1\le k \le 6$. Then
\begin{equation*}
\begin{split}
\left|\frac{\delta_{\tau}(e^{-(\sigma+z)\tau})}{\delta_{\tau}(e^{-z\tau})}-\frac{\sigma+z}{z}\right|
  \le c \tau^{k}\left(\left|\sigma+z\right|^{k+1}\left|z\right|^{-1}+\left|\sigma+z\right|\left|z\right|^{k-1}\right)\le c \tau^{k}\left|z\right|^{k},
\end{split}
\end{equation*}
where $\kappa$ in \eqref{2.a10} is large enough compared with $\sigma$.
\end{lemma}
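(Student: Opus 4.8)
The plan is to reduce the estimate to three facts already established: the bound $\left|\delta_{\tau}(e^{-(\sigma+z)\tau})-(\sigma+z)\right|\le c\tau^{k}|\sigma+z|^{k+1}$ from Lemma \ref{Lemma 2.0001}, the analogous one-power bound $\left|\delta_{\tau}(e^{-z\tau})-z\right|\le c\tau^{k}|z|^{k+1}$, which follows directly from Lemma \ref{Lemma 2.2} by taking $y=z\tau$ and dividing by $\tau$ (the remainder $\mathcal{O}(\tau^{k+1}z^{k+2})$ is controlled because $|z\tau|$ is bounded on $\Gamma^{\tau}_{\theta,\kappa}$), and the two-sided bound $c_{1}|z|\le\left|\delta_{\tau}(e^{-z\tau})\right|\le c_{2}|z|$ from Lemma \ref{Lemma 2.3}.

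First I would put the two quotients over a common denominator,
\[
\frac{\delta_{\tau}(e^{-(\sigma+z)\tau})}{\delta_{\tau}(e^{-z\tau})}-\frac{\sigma+z}{z}
=\frac{z\,\delta_{\tau}(e^{-(\sigma+z)\tau})-(\sigma+z)\,\delta_{\tau}(e^{-z\tau})}{z\,\delta_{\tau}(e^{-z\tau})},
\]
and then split the numerator by adding and subtracting $z(\sigma+z)$:
\[
z\,\delta_{\tau}(e^{-(\sigma+z)\tau})-(\sigma+z)\,\delta_{\tau}(e^{-z\tau})
=z\bigl(\delta_{\tau}(e^{-(\sigma+z)\tau})-(\sigma+z)\bigr)-(\sigma+z)\bigl(\delta_{\tau}(e^{-z\tau})-z\bigr).
\]

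Next I would apply the two difference bounds to the two summands and the lower bound $\left|\delta_{\tau}(e^{-z\tau})\right|\ge c_{1}|z|$ to the denominator, obtaining
\[
\left|\frac{\delta_{\tau}(e^{-(\sigma+z)\tau})}{\delta_{\tau}(e^{-z\tau})}-\frac{\sigma+z}{z}\right|
\le\frac{c\tau^{k}\bigl(|z|\,|\sigma+z|^{k+1}+|\sigma+z|\,|z|^{k+1}\bigr)}{c_{1}|z|^{2}}
=c\tau^{k}\left(|\sigma+z|^{k+1}|z|^{-1}+|\sigma+z|\,|z|^{k-1}\right),
\]
which is the first asserted inequality. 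For the second, since $\kappa$ is chosen large compared with $\sigma$, on $\Gamma^{\tau}_{\theta,\kappa}$ we have $|z|\ge\kappa$ and hence $|\sigma+z|\le|z|+\sigma\le(1+\sigma/\kappa)|z|\le c|z|$; substituting this into both terms gives $|\sigma+z|^{k+1}|z|^{-1}\le c|z|^{k}$ and $|\sigma+z|\,|z|^{k-1}\le c|z|^{k}$, so the bound collapses to $c\tau^{k}|z|^{k}$, which completes the proof.

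The computation is essentially routine; the only points that require a little care are controlling the denominator from below (so that Lemma \ref{Lemma 2.3}, not merely Lemma \ref{Lemma 2.2}, is genuinely used) and invoking the hypothesis that $\kappa$ is large compared with $\sigma$ at precisely the step where one passes from the sharp two-term estimate to the simpler $c\tau^{k}|z|^{k}$ bound. I do not anticipate any real obstacle here.
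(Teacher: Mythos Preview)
Your proposal is correct and follows essentially the same route as the paper: common denominator, add and subtract $z(\sigma+z)$ to split the numerator, then apply the difference bounds from Lemmas~\ref{Lemma 2.0001}/\ref{Lemma 2.2} together with the lower bound $|\delta_{\tau}(e^{-z\tau})|\ge c_{1}|z|$ from Lemma~\ref{Lemma 2.3}, and finally absorb $|\sigma+z|\le c|z|$ using the largeness of~$\kappa$. The paper's proof is identical in structure and in the lemmas invoked.
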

\begin{proof}
According to Lemma \ref{Lemma 2.3} and \ref{Lemma 2.0001}, we have
\begin{equation*}
\begin{split}
 & \left|\frac{\delta_{\tau}(e^{-(\sigma+z)\tau})}{\delta_{\tau}(e^{-z\tau})}-\frac{\sigma+z}{z}\right|
   = \left|\frac{z\delta_{\tau}(e^{-(\sigma+z)\tau})-z(\sigma+z)+(\sigma+z)z-(\sigma+z)\delta_{\tau}(e^{-z\tau})}{z\delta_{\tau}(e^{-z\tau})}\right|\\
&   \le \left|\frac{\delta_{\tau}(e^{-(\sigma+z)\tau})-(\sigma+z)}{\delta_{\tau}(e^{-z\tau})}\right|
   +\left|\frac{(\sigma+z)\left(z-\delta_{\tau}(e^{-z\tau})\right)}{z\delta_{\tau}(e^{-z\tau})}\right|\\
&   \le\frac{c \tau^{k}\left|\sigma+z\right|^{k+1}}{\left|\delta_{\tau}(e^{-z\tau})\right|}
   +\frac{c \tau^{k}\left|\sigma+z\right|\left|z\right|^{k}}{\left|\delta_{\tau}(e^{-z\tau})\right|} \le\frac{c \tau^{k}\left|\sigma+z\right|^{k+1}+c \tau^{k}\left|\sigma+z\right|\left|z\right|^{k}}{c_{1}\left|z\right|}\\
& \le c \tau^{k}\left(\left|\sigma+z\right|^{k+1}\left|z\right|^{-1}+\left|\sigma+z\right|\left|z\right|^{k-1}\right)\le c \tau^{k}\left|z\right|^{k}.
\end{split}
\end{equation*}
The proof is completed.
\end{proof}


\begin{lemma}\label{lemma3.1}
Let $\gamma_{l}(\xi)$ be given by \eqref{2.14}. Then
\begin{equation*}
\left| \frac{\gamma_{l}(e^{-z\tau})}{l!}\tau^{l+1}-\frac{1}{z^{l+1}}\right|\leq c\tau^{l+1}\quad l=1,2,\ldots,5,\quad z\in\Gamma^{\tau}_{\theta,\kappa}.
\end{equation*}
\end{lemma}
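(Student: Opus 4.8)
The idea is to strip off the explicit singular part of $\gamma_l(e^{-z\tau})$ at $z=0$ and to show that, after subtracting $z^{-(l+1)}$, only an analytic remainder of size $\mathcal{O}(\tau^{l+1})$ survives. First I would record the elementary identity, valid for $\Re w>0$ where the series converge locally uniformly,
\[
\gamma_l(e^{-w})=\sum_{n=1}^{\infty}n^{l}e^{-nw}=\Big(-\tfrac{d}{dw}\Big)^{l}\sum_{n=1}^{\infty}e^{-nw}
=\Big(-\tfrac{d}{dw}\Big)^{l}\frac{1}{e^{w}-1}.
\]
Next I would invoke the classical Laurent expansion $\tfrac{1}{e^{w}-1}=\tfrac1w+g(w)$, where $g$ is analytic on the disk $\{|w|<2\pi\}$, the remaining poles of $(e^{w}-1)^{-1}$ lying at $\pm 2\pi i$. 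Differentiating $l$ times and using $\big(-\tfrac{d}{dw}\big)^{l}\tfrac1w=\tfrac{l!}{w^{l+1}}$ gives
\[
\gamma_l(e^{-w})=\frac{l!}{w^{l+1}}+r_l(w),\qquad r_l:=\Big(-\tfrac{d}{dw}\Big)^{l}g,
\]
with $r_l$ again analytic on $\{|w|<2\pi\}$.

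Then I would set $w=z\tau$ for $z\in\Gamma^{\tau}_{\theta,\kappa}$ (note $\Re z>0$ there whenever $\theta<\pi/2$, so the identity above applies on the whole contour). Since $\tfrac{\tau^{l+1}}{l!}\cdot\tfrac{l!}{(z\tau)^{l+1}}=\tfrac{1}{z^{l+1}}$, the singular terms cancel exactly and
\[
\frac{\gamma_l(e^{-z\tau})}{l!}\tau^{l+1}-\frac{1}{z^{l+1}}=\frac{\tau^{l+1}}{l!}\,r_l(z\tau).
\]
It then remains only to bound $r_l(z\tau)$ uniformly. On $\Gamma^{\tau}_{\theta,\kappa}$ one has $|z|=\kappa$ on the circular arc and $|z|<\pi/(\tau\sin\theta)$ on the two rays; since the ray part is nonempty, automatically $\kappa\tau<\pi/\sin\theta$, hence $|z\tau|<\pi/\sin\theta$ throughout. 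Choosing $\theta\in(\pi/6,\pi/2)$ makes $R:=\pi/\sin\theta<2\pi$, so $z\tau$ stays in the closed disk $\overline{D(0,R)}$ on which $r_l$ is analytic, hence bounded by some $M_l=M_l(\theta)$ independent of $\tau$ and $z$. Therefore
\[
\Big|\frac{\gamma_l(e^{-z\tau})}{l!}\tau^{l+1}-\frac{1}{z^{l+1}}\Big|
=\frac{\tau^{l+1}}{l!}\,|r_l(z\tau)|\le\frac{M_l}{l!}\,\tau^{l+1}=c\,\tau^{l+1},
\]
which is the assertion (the range $l=1,\dots,5$ plays no special role; the argument works for every $l\ge1$).

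The one point that needs a little care is keeping $z\tau$ strictly inside the disk of analyticity $\{|w|<2\pi\}$ of $r_l$; this is exactly where the truncation of the rays of $\Gamma^{\tau}_{\theta,\kappa}$ at $r=\pi/(\tau\sin\theta)$, together with $\sin\theta>\tfrac12$, is used, and it is the same mechanism that underlies Lemmas \ref{Lemma 2.2}--\ref{Lemma 2.0002}. Everything else — the Laurent expansion of $(e^{w}-1)^{-1}$, term-by-term differentiation of a locally uniformly convergent series, and bounding an analytic function on a compact disk — is routine, so I do not anticipate any genuine obstacle beyond tracking constants. (Alternatively one could start from the rational representation $\gamma_l(\xi)=P_l(\xi)/(1-\xi)^{l+1}$ with $P_l(1)=l!$, proved by induction from $\gamma_{l+1}(\xi)=\xi\gamma_l'(\xi)$, and reach the same remainder; the route via $(e^{w}-1)^{-1}$ is just more direct.)
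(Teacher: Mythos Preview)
Your argument is correct and considerably cleaner than the paper's. The paper proceeds by brute force: for each $l=1,\dots,5$ it writes out the explicit rational closed form $\gamma_l(\xi)=\psi_l(\xi)/(1-\xi)^{l+1}$ (Eulerian-polynomial numerators), forms the difference $\psi_l(e^{-z\tau})z^{l+1}-l!(1-e^{-z\tau})^{l+1}\tau^{-(l+1)}$, Taylor-expands every exponential, and verifies case by case that the leading powers cancel down to order $\tau^{l+1}|z|^{2l+2}$, recording the surviving coefficients in two separate tables; together with the lower bound $|(1-e^{-z\tau})^{l+1}z^{l+1}|\ge c\tau^{l+1}|z|^{2l+2}$ this gives the claim. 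Your route---peel off the simple pole via $(e^w-1)^{-1}=w^{-1}+g(w)$, differentiate $l$ times, and bound the analytic remainder on a compact disk---isolates the singular part in one stroke, needs no case analysis or coefficient tables, and works for every $l\ge1$, not just $l\le5$.

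One small slip to repair: in this paper the contour $\Gamma^\tau_{\theta,\kappa}$ is used with $\theta\in(\pi/2,\pi)$ (otherwise $e^{zt_n}$ would not decay along the rays in the estimates of Lemmas~\ref{lemma3.2}--\ref{lemma3.3} and Theorem~\ref{Theorem 1}), so $\Re z$ is \emph{negative} on the rays and the series $\sum n^l e^{-nz\tau}$ diverges there. This does not damage your argument, since $\gamma_l(e^{-w})=(-\tfrac{d}{dw})^l(e^w-1)^{-1}$ is an identity of meromorphic functions, valid by analytic continuation wherever $e^w\neq1$; but your parenthetical ``$\Re z>0$ whenever $\theta<\pi/2$'' should be dropped, and the constraint $\sin\theta>\tfrac12$ needed to keep $|z\tau|<2\pi$ should be stated as $\theta\in(\pi/2,5\pi/6)$ rather than $\theta\in(\pi/6,\pi/2)$.
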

\begin{proof}
From \eqref{2.14}, we have
\begin{equation*}
\begin{split}
\gamma_{1}(e^{-z\tau})&=\frac{e^{-z\tau}}{\left(1-e^{-z\tau}\right)^2},\qquad
\gamma_{2}(e^{-z\tau})=\frac{e^{-z\tau}+e^{-2z\tau}}{\left(1-e^{-z\tau}\right)^3},\\
\gamma_{3}(e^{-z\tau})&=\frac{e^{-z\tau}+4e^{-2z\tau}+e^{-3z\tau}}{\left(1-e^{-z\tau}\right)^4},\\
\gamma_{4}(e^{-z\tau})&=\frac{e^{-z\tau}+11e^{-2z\tau}+11e^{-3z\tau}+e^{-4z\tau}}{\left(1-e^{-z\tau}\right)^5},\\
\gamma_{5}(e^{-z\tau})&=\frac{e^{-z\tau}+26e^{-2z\tau}+66e^{-3z\tau}+26e^{-4z\tau}+e^{-5z\tau}}{\left(1-e^{-z\tau}\right)^6}.
\end{split}
\end{equation*}
Using  the Taylor series expansion, we have
\begin{equation}\label{3.0001}
\left| (1-e^{-z\tau})^{l+1}\tau^{-(l+1)}z^{l+1} \right|\geq c|z|^{2l+2}.
\end{equation}
For simplicity, we denote $ \gamma_{l}(e^{-z\tau}) = \frac{\psi_{l}(e^{-z\tau})}{\rho_{l}(e^{-z\tau})}$ with $\rho_{l}(e^{-z\tau})=(1-e^{-z\tau})^{l+1}$, it yields
\begin{equation*}
\begin{split}
&\left|\left(\psi_{l}(e^{-z\tau})\right)z^{l+1}- l!\rho_{l}(e^{-z\tau})\tau^{-(l+1)}\right|\\
=&\left| \tau^{l+2}z^{2l+3}\sum^{\infty}_{n=0}\left( \frac{\sum^{l}_{j=1}p_{l,j}(-jz\tau)^{n}}{(n+l+1)!}
-l!\frac{\sum^{l+1}_{j=1}c_{l,j}(-jz\tau)^{n}}{(n+2l+2)!}\right)\right|\\
\leq& c \tau^{l+2}|z|^{2l+3} \leq c \tau^{l+1}|z|^{2l+2} \quad {\rm if}~~l=2,4,
\end{split}
\end{equation*}
and
\begin{equation*}
\begin{split}
&\left|\left(\psi_{l}(e^{-z\tau})\right)z^{l+1}- l!\rho_{l}(e^{-z\tau})\tau^{-(l+1)}\right|\\
=&\left| \tau^{l+1}z^{2l+2}\sum^{\infty}_{n=0}\left( \frac{\sum^{l}_{j=1}p_{l,j}(-jz\tau)^{n}}{(n+l+1)!}
-l!\frac{\sum^{l+1}_{j=1}c_{l,j}(-jz\tau)^{n}}{(n+2l+2)!}\right)\right|\\
\leq& c \tau^{l+1}|z|^{2l+2} \quad {\rm if}~~l=1,3,5.
\end{split}
\end{equation*}
Here the coefficients $p_{l,j}$ and $c_{l,j}$ are, respectively,  given in Table \ref{table:3} and Table \ref{table:4}.
\begin{table}[h]\fontsize{9pt}{12pt}\selectfont
 \begin{center}
  \caption {The coefficients $p_{l,j}$.} \vspace{5pt}
\begin{tabular}{|c| c c c c c |}                                \hline  
                &  $p_{l,1}$  &  $p_{l,2}$  &  $p_{l,3}$  &  $p_{l,4}$  &  $p_{l,5}$       \\ \hline
 $l=1$          &  1          &             &             &             &                  \\ \hline
 $l=2$          &  1          &  16         &             &             &                  \\ \hline
 $l=3$          &  1          &  64         &  8          &             &                  \\ \hline
 $l=4$          &  1          &  704        &  8019       &  4096       &                  \\ \hline
 $l=5$          &  1          &  1664       &  48114      &  106496     &  15625           \\ \hline
    \end{tabular}\label{table:3}
  \end{center}
\end{table}
\begin{table}[h]\fontsize{9pt}{12pt}\selectfont
 \begin{center}
  \caption {The coefficients $c_{l,j}$.} \vspace{5pt}
\begin{tabular}{|c| c c c c c c|}                                \hline  
               &  $c_{l,1}$  &  $c_{l,2}$  &  $c_{l,3}$  &  $c_{l,4}$  &  $c_{l,5}$  &  $c_{l,6}$  \\ \hline
 $l=1$         &  -2         &  16         &             &             &             &             \\ \hline
 $l=2$         &  3          &  -384       & 2187        &             &             &             \\ \hline
 $l=3$         &  -4         &  1536       & -26244      & 65536       &             &             \\ \hline
 $l=4$         &  5          &  -20480     & 1771470     & -209715201  &    48828125 &             \\ \hline
 $l=5$         &  -6         &  61440      & -10628820   & 251658240   & -1464843750 & 2176782336  \\ \hline
    \end{tabular}\label{table:4}
  \end{center}
\end{table}
The proof is completed.
\end{proof}

\subsection{Error analysis}
We now given the error analysis of correction BDF$k$ \eqref{2.4} for \eqref{1.1}.
\begin{lemma}\label{lemma3.2}
Let $G(t)$ and $G^{n}$ be the solutions of \eqref{1.1}  and \eqref{2.4}, respectively.
If $G_{0}=0$ and $f(t)=\frac{t^{k-1}}{(k-1)!} g$ with $g=\partial^{k-1}_{t}f(0)$, then
\begin{equation*}
\left\| G^{n}-G(t_{n}) \right\| \leq c \tau^{k}\int^{t_{n}}_{0} (t_{n}-s)^{\gamma-1} \| g \| ds.
\end{equation*}
\end{lemma}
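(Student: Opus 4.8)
The plan is to compare the exact solution representation \eqref{2.10} with the discrete representation \eqref{2.13} under the specialization $G_0=0$ and $f(t)=\tfrac{t^{k-1}}{(k-1)!}g$. Under this choice, in \eqref{2.5}--\eqref{2.6} all the intermediate terms $\partial^l_t f(0)$ for $1\le l\le k-2$ vanish, so that $f(0)=0$, $\partial^l_t f(0)=0$, and $R_k(t)=\tfrac{t^{k-1}}{(k-1)!}g$ with no convolution remainder. Consequently both \eqref{2.10} and \eqref{2.13} collapse to a single term: the exact solution becomes
\begin{equation*}
W(t)=\frac{1}{2\pi i}\int_{\Gamma_{\theta,\kappa}} e^{zt}(\sigma+z)K(\sigma+z)\widehat{R}_k(z)\,dz,\qquad \widehat{R}_k(z)=\frac{\|g\|\text{-type factor}}{z^{k}}\,g
\end{equation*}
(more precisely $\widehat{R}_k(z)=z^{-k}g$), while $W^n$ becomes the last integral in \eqref{2.13}, namely
\begin{equation*}
W^n=\frac{1}{2\pi i}\int_{\Gamma^{\tau}_{\theta,\kappa}} e^{t_n z}K\bigl(\delta_\tau(e^{-(\sigma+z)\tau})\bigr)\,\delta_\tau(e^{-(\sigma+z)\tau})\,\tau\widetilde{R}_k(e^{-z\tau})\,dz.
\end{equation*}
Since $G(t_n)-G^n=W(t_n)-W^n$ (the $e^{-\sigma t_n}G_0$ shift cancels and is zero anyway), it suffices to bound the difference of these two contour integrals.

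Next I would deform the exact-solution contour $\Gamma_{\theta,\kappa}$ to $\Gamma^{\tau}_{\theta,\kappa}$, which is legitimate because the integrand of the exact representation is analytic in the relevant sector and decays, and the portion of $\Gamma_{\theta,\kappa}$ outside $\Gamma^\tau_{\theta,\kappa}$ (i.e. $|{\rm Im}\,z|$ beyond $\pi/\tau$) contributes only $\mathcal{O}(\tau^k)$ by the usual argument (the integrand there is $\mathcal{O}(|z|^{-1-k})$ and $e^{zt_n}$ is bounded on the rays, giving a tail bounded by $\tau^k$ after integrating $|z|^{-1-k}$). Then on the common contour $\Gamma^\tau_{\theta,\kappa}$ I would write the integrand difference as a telescoping sum, splitting
\begin{equation*}
K(\delta_\tau)\,\delta_\tau\,\tau\widetilde{R}_k(e^{-z\tau})-(\sigma+z)K(\sigma+z)\widehat{R}_k(z)
\end{equation*}
into three pieces: one replacing $K(\delta_\tau(e^{-(\sigma+z)\tau}))$ by $K(\sigma+z)$, one replacing $\delta_\tau(e^{-(\sigma+z)\tau})$ by $(\sigma+z)$, and one replacing $\tau\widetilde{R}_k(e^{-z\tau})$ by $\widehat{R}_k(z)$. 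The first is controlled by Lemma \ref{Lemma 2.5} ($\mathcal{O}(\tau^k|z|^{k-1-\gamma})$), the second by Lemma \ref{Lemma 2.0001} ($\mathcal{O}(\tau^k|z|^{k+1})$, combined with $\|K(\sigma+z)\|\le c|z|^{-1-\gamma}$ from \eqref{2.11111} and $|\widehat{R}_k(z)|\le c|z|^{-k}\|g\|$), and the third needs an estimate $|\tau\widetilde{R}_k(e^{-z\tau})-\widehat{R}_k(z)|\le c\tau^k|z|^{k-1-k}\cdot(\text{something})$ — really that the generating function $\tau\sum_n e^{-zn\tau}R_k(t_n)$ approximates the Laplace transform $\widehat{R}_k(z)=z^{-k}g$ to order $\tau^k$; since $R_k(t)=\tfrac{t^{k-1}}{(k-1)!}g$ this is essentially the statement that $\tau\sum_n e^{-zn\tau}\tfrac{t_n^{k-1}}{(k-1)!}=\tfrac{\gamma_{k-1}(e^{-z\tau})}{(k-1)!}\tau^{k}$ differs from $z^{-k}$ by $\mathcal{O}(\tau^k|z|^{-k}\cdot|z|^k)=\mathcal{O}(\tau^k)$, which is exactly the content of Lemma \ref{lemma3.1} with $l=k-1$. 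In each case, after multiplying by $e^{t_n z}$ and integrating over $\Gamma^\tau_{\theta,\kappa}$ with $\kappa\sim 1/t_n$ (or $\kappa$ fixed and splitting the arc), one gets a bound of the form $c\tau^k\int_{\Gamma^\tau_{\theta,\kappa}}|e^{t_nz}|\,|z|^{-1-\gamma}\,|dz|$, and the standard estimate $\int_{\Gamma^\tau_{\theta,\kappa}}|e^{t_nz}|\,|z|^{-1-\gamma}|dz|\le c\,t_n^{\gamma}\le c\int_0^{t_n}(t_n-s)^{\gamma-1}ds$ yields the claimed $c\tau^k\int_0^{t_n}(t_n-s)^{\gamma-1}\|g\|\,ds$.

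I expect the main obstacle to be the third piece — carefully establishing that the discretized Laplace transform $\tau\widetilde{R}_k(e^{-z\tau})$ approximates $\widehat{R}_k(z)$ with the right power of $|z|$ uniformly on $\Gamma^\tau_{\theta,\kappa}$, including the behavior near $|z|\sim 1/\tau$ where $e^{-z\tau}$ is not small; this is where Lemma \ref{lemma3.1} (and its somewhat delicate Taylor-expansion bookkeeping with the coefficient tables) does the real work, and one must check that combining it with $\|\delta_\tau K(\delta_\tau)\|\le c|z|^{-\gamma}$ and $|e^{t_n z}|$ integrated along the contour still gives a clean $t_n^\gamma$ factor rather than something that degenerates. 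A secondary technical point is justifying the contour deformation and the truncation of $\Gamma_{\theta,\kappa}$ to $\Gamma^\tau_{\theta,\kappa}$, but that is routine given the resolvent bounds \eqref{2.11111}. Once all three pieces are in hand, summing them and using $t_n^\gamma\le c\int_0^{t_n}(t_n-s)^{\gamma-1}ds$ completes the proof.
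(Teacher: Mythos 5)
Your proposal is essentially the paper's own proof: both reduce, under $G_0=0$ and $f=\tfrac{t^{k-1}}{(k-1)!}g$, to comparing the two contour representations, split the error into the integrand difference on $\Gamma^{\tau}_{\theta,\kappa}$ (controlled by the resolvent perturbation bounds together with Lemma \ref{lemma3.1} at $l=k-1$) plus the truncated tail on $\Gamma_{\theta,\kappa}\setminus\Gamma^{\tau}_{\theta,\kappa}$, and your three-way telescoping is just an explicit version of what the paper compresses into ``Using Lemma \ref{lemma3.1}, it yields.'' One bookkeeping remark: the correct per-piece bound is $c\tau^{k}|z|^{-\gamma}$ (not $c\tau^{k}|z|^{-1-\gamma}$), so the contour integral produces $c\tau^{k}t_{n}^{\gamma-1}\|g\|$ exactly as in the paper's proof (and as used later in Lemma \ref{lemma3.3}), rather than the $t_{n}^{\gamma}$ your sketch suggests.
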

\begin{proof}
From \eqref{2.10} and \eqref{2.13}, we have
\begin{equation*}
G(t_{n})=\frac{1}{2\pi i}\int_{\Gamma_{\theta,\kappa}}{e^{zt_{n}}((\sigma+z)^{\gamma}+A)^{-1}\frac{1}{z^{k}}g}dz
\end{equation*}
and
\begin{equation*}
G^{n}=\frac{1}{2\pi i}\int_{\Gamma^{\tau}_{\theta,\kappa}}e^{zt_{n}}(\delta_{\tau}^{\gamma}(e^{-(\sigma+z)\tau})+A)^{-1}
\frac{\gamma_{k-1}(e^{-z\tau})}{(k-1)!}\tau^{k} g dz.
\end{equation*}
Then
\begin{equation*}
\begin{split}
&G(t_{n})-G^{n}=\textrm{\uppercase\expandafter{\romannumeral1}} + \textrm{\uppercase\expandafter{\romannumeral2}}. 
\end{split}
\end{equation*}
Here

\begin{equation*}
\begin{split}
\textrm{\uppercase\expandafter{\romannumeral1}}
=\frac{1}{2\pi i}\int_{\Gamma^{\tau}_{\theta,\kappa}}e^{zt_{n}}\left(((\sigma+z)^{\gamma}+A)^{-1}\frac{1}{z^{k}}
-(\delta_{\tau}^{\gamma}(e^{-(\sigma+z)\tau})+A)^{-1}\frac{\gamma_{k-1}(e^{-z\tau})}{(k-1)!}\tau^{k}\right) g dz
\end{split}
\end{equation*}
and
\begin{equation*}
\begin{split}
\textrm{\uppercase\expandafter{\romannumeral2}}
=\frac{1}{2\pi i}\int_{\Gamma_{\theta,\kappa}\setminus\Gamma^{\tau}_{\theta,\kappa}}{e^{zt_{n}}((\sigma+z)^{\gamma}+A)^{-1}z^{-k}g}dz
\end{split}
\end{equation*}
with
\begin{equation*}
\Gamma_{\theta,\kappa} \backslash \Gamma^{\tau}_{\theta,\kappa}=
\left\{z\in\mathbb{C}:z=re^{\pm i\theta},\frac{\pi}{\tau\sin(\theta)}\le r<\infty \right\}.
\end{equation*}
Using Lemma \ref{lemma3.1}, it yields
\begin{equation*}
\begin{split}
  \| \textrm{\uppercase\expandafter{\romannumeral1}} \|
  &  \leq c\tau^{k}\|g\|\left( \int^{\frac{\pi}{\tau\sin\theta}}_{\kappa} e^{rt_{n}\cos\theta} r^{-\gamma}dr +\int^{\theta}_{-\theta}e^{\kappa t_{n}\cos\psi} \kappa^{1-\gamma} d\psi \right)\\
  &  =  c\tau^{k}\|g\|\left(t_n^{\gamma-1} \int^{\frac{t_n\pi}{\tau\sin\theta}}_{t_n\kappa} e^{s\cos\theta} s^{-\gamma}dr +\kappa^{1-\gamma}\int^{\theta}_{-\theta}e^{\kappa t_{n}\cos\psi}  d\psi \right)\\
  &  \leq c\tau^{k}\|g\|\left(t_n^{\gamma-1} +\kappa^{1-\gamma}\int^{\theta}_{-\theta}e^{\kappa T}  d\psi \right)
    \leq c\tau^{k}\|g\|\left(t_n^{\gamma-1} +\kappa^{1-\gamma}e^{\kappa T}  \right)\\
  &  \leq c\tau^{k}\|g\|\left(t_n^{\gamma-1} +(\kappa T)^{1-\gamma}e^{\kappa T}t_n^{\gamma-1}  \right)
    \leq  c\tau^{k}t_{n}^{\gamma-1}\|g\|
\end{split}
\end{equation*}
and
\begin{equation*}
\begin{split}
\| \textrm{\uppercase\expandafter{\romannumeral2}} \|
&\leq c \|g\| \int^{\infty}_{\frac{\pi}{\tau\sin\theta}} e^{rt_{n}\cos\theta}r^{-k-\gamma}dr\\
&\leq c\tau^{k} \|g\| \int^{\infty}_{\frac{\pi}{\tau\sin\theta}} e^{rt_{n}\cos\theta}r^{-\gamma}dr
\leq  c\tau^{k}t_{n}^{\gamma-1}\|g\|.
\end{split}
\end{equation*}
According to the triangle inequality,  the desired result are obtained.
\end{proof}

\begin{lemma}\label{lemma3.3}
Let $G(t)$ and $G^{n}$ be the solutions of \eqref{1.1}  and \eqref{2.4}, respectively.
If $G_{0}=0$ and $f(t)=\frac{t^{k-1}}{(k-1)!}\ast g(t)$, then
\begin{equation*}
\left\| G^{n}-G(t_{n}) \right\| \leq c \tau^{k}\int^{t_{n}}_{0} (t_{n}-s)^{\gamma-1} \| g(s) \| ds.
\end{equation*}
\end{lemma}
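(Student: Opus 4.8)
\noindent The plan is to exploit the convolution structure $f=\phi\ast g$ with $\phi(t)=\frac{t^{k-1}}{(k-1)!}$ and reduce the inhomogeneous estimate to a family of monomial estimates of the type already handled in Lemma~\ref{lemma3.2} and Lemma~\ref{lemma3.1}. First I would observe that, since $G_{0}=0$ and $f$ vanishes to order $k-1$ at $t=0$ (so $f(0)=0$ and $\partial_{t}^{l}f(0)=0$ for $1\le l\le k-2$), all correction terms in \eqref{2.4} disappear: $G^{n}$ is exactly the uncorrected BDF$k$ solution $G^{n}=\sum_{j=1}^{n}\omega_{n-j}^{\tau}f(t_{j})$ with generating function $\sum_{l}\omega_{l}^{\tau}\xi^{l}=(\delta_{\tau}(e^{-\sigma\tau}\xi)^{\gamma}+A)^{-1}$, and \eqref{2.10} collapses to $G(t_{n})=(\Psi\ast g)(t_{n})$, where $\Psi=\bar E\ast\phi$ is the exact solution kernel of \eqref{1.1} (with $G_0=0$) for right-hand side $\phi$, and $\widehat{\bar E}(z)=((\sigma+z)^{\gamma}+A)^{-1}$. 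Writing $f(t_{j})=\int_{0}^{t_{j}}\phi(t_{j}-s)g(s)\,ds$ and applying Fubini, the numerical solution becomes a continuous superposition $G^{n}=\int_{0}^{t_{n}}\mathcal K_{n}^{\tau}(s)g(s)\,ds$ with $\mathcal K_{n}^{\tau}(s)=\sum_{1\le j\le n,\ t_{j}\ge s}\omega_{n-j}^{\tau}\phi(t_{j}-s)$. Hence $G(t_{n})-G^{n}=\int_{0}^{t_{n}}(\Psi(t_{n}-s)-\mathcal K_{n}^{\tau}(s))g(s)\,ds$, and it suffices to prove the kernel bound $\|\Psi(t_{n}-s)-\mathcal K_{n}^{\tau}(s)\|\le c\tau^{k}(t_{n}-s)^{\gamma-1}$ for $0<s<t_{n}$; integrating against $\|g(s)\|$ then gives the claim.

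\medskip
\noindent To establish the kernel bound I would fix $s\in(t_{m-1},t_{m}]$, set $u=t_{m}-s\in[0,\tau)$ and $N'=n-m$, and re-index $i=j-m$ to get $\mathcal K_{n}^{\tau}(s)=\sum_{i=0}^{N'}\omega_{N'-i}^{\tau}\phi(t_{i}+u)$. Peeling off the $i=0$ term leaves the uncorrected BDF$k$ solution $\Theta_{N'}^{\tau}$ at step $N'$ for the \emph{shifted} right-hand side $\psi(t)=\phi(t+u)$ plus $\omega_{N'}^{\tau}\phi(u)$, and the latter is $\mathcal O(\tau^{k}\max(t_{N'},\tau)^{\gamma-1})$ because $\|\omega_{N'}^{\tau}\|\le c\tau\max(t_{N'},\tau)^{\gamma-1}$ (a short contour estimate using $\|(\delta_{\tau}^{\gamma}+A)^{-1}\|\le c|z|^{-\gamma}$ from Lemma~\ref{Lemma 2.5}) and $|\phi(u)|\le c\tau^{k-1}$. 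On the exact side, a change of variables gives $\Psi(t_{n}-s)=(\bar E\ast\psi)(t_{N'})+\int_{0}^{u}\bar E(t_{N'}+\rho)\phi(u-\rho)\,d\rho$, whose remainder is again $\mathcal O(\tau^{k}\max(t_{N'},\tau)^{\gamma-1})$ via $\|\bar E(t)\|\le ct^{\gamma-1}$. So the kernel bound reduces to controlling $\|(\bar E\ast\psi)(t_{N'})-\Theta_{N'}^{\tau}\|$, the exact-versus-uncorrected-CQ error for the shifted monomial $\psi$. Expanding $\psi(t)=\phi(t+u)=\sum_{m'=1}^{k}\frac{u^{k-m'}}{(k-m')!}\phi_{m'}(t)$, $\phi_{m'}(t)=\frac{t^{m'-1}}{(m'-1)!}$, by the binomial theorem and repeating the contour argument of Lemma~\ref{lemma3.2} for each $\phi_{m'}$ (now carrying $|z|^{-m'}$ in place of $|z|^{-k}$, and using Lemma~\ref{Lemma 2.3}, Lemma~\ref{Lemma 2.5} and Lemma~\ref{lemma3.1}) gives an $\mathcal O(\tau^{m'}t_{N'}^{\gamma-1})$ bound for each component; summing, $\sum_{m'=1}^{k}\frac{u^{k-m'}}{(k-m')!}\tau^{m'}t_{N'}^{\gamma-1}\le ck\tau^{k}t_{N'}^{\gamma-1}$ since $u\le\tau$. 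Together with the comparability $t_{n-m}\le t_{n}-s\le 2t_{n-m}$ for $m\le n-1$ — and a separate direct bound of $\Psi$ and $\mathcal K_{n}^{\tau}$ by $c(t_{n}-s)^{\gamma+k-1}$ and $c\tau^{\gamma}(t_{n}-s)^{k-1}$ on the last interval $s\in(t_{n-1},t_{n}]$, both $\le c\tau^{k}(t_{n}-s)^{\gamma-1}$ there — this yields the kernel bound, and a final integration completes the proof.

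\medskip
\noindent The main obstacle is the sub-grid shift produced by the Fubini step: the ``source time'' $s$ is not a grid point, so the relevant right-hand side is the genuinely shifted monomial $\psi=\phi(\,\cdot+u)$ with $u\in[0,\tau)$ rather than the clean $\phi=\phi_{k}$, and one must (i) dispose of the extra $i=0$ term and of the time-shift remainder, and, more essentially, (ii) verify that the reduced accuracy $\mathcal O(\tau^{m'})$ for the lower-degree components $\phi_{m'}$ with $m'<k$ is \emph{exactly} compensated by the prefactors $u^{k-m'}\le\tau^{k-m'}$, so that the full $\mathcal O(\tau^{k})$ rate is preserved.
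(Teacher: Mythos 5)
Your proposal is correct and follows essentially the same route as the paper: both reduce the estimate, via the convolution structure $f=\frac{t^{k-1}}{(k-1)!}\ast g$, to the off-grid kernel bound $\big\|\big((\mathscr{E}_{\tau}-\mathscr{E})\ast\frac{t^{k-1}}{(k-1)!}\big)(t)\big\|\le c\tau^{k}t^{\gamma-1}$ for $t\in(t_{n-1},t_n)$, and both handle the sub-grid offset by expanding the monomial into lower-degree monomials whose reduced $\mathcal{O}(\tau^{m'})$ accuracy is compensated by the $u^{k-m'}\le\tau^{k-m'}$ prefactors. Your binomial expansion of the shifted monomial is exactly the paper's Taylor expansion of $\mathscr{E}\ast\frac{t^{k-1}}{(k-1)!}$ about $t_{n}$, merely written with the Fubini step carried out by hand instead of via the Dirac-comb convolution $\mathscr{E}_{\tau}(t)=\sum_{n}\mathscr{E}^{n}_{\tau}\delta_{t_{n}}(t)$.
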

\begin{proof}
From  \eqref{2.10}, we have
\begin{equation*}
\begin{split}
G(t_{n})
&=\frac{1}{2\pi i}\int_{\Gamma_{\theta,\kappa}}{e^{zt_{n}}((\sigma+z)^{\gamma}+A)^{-1}\widehat{f}(z)}dz\\
&=\left(\frac{1}{2\pi i}\int_{\Gamma_{\theta,\kappa}}{e^{zt}((\sigma+z)^{\gamma}+A)^{-1}}dz\ast f\right)(t_{n})\\
&=(\mathscr{E}\ast f)(t_{n})=\left(\mathscr{E}\ast \left(\frac{t^{k-1}}{(k-1)!}\ast g(t)\right)\right)(t_{n})
=\left(\left(\mathscr{E}\ast\frac{t^{k-1}}{(k-1)!}\right)\ast g(t)\right)(t_{n})
\end{split}
\end{equation*}
with $\mathscr{E}(t)= \frac{1}{2\pi i}\int_{\Gamma_{\theta,\kappa}}{e^{zt}((\sigma+z)^{\gamma}+A)^{-1}}dz$.

 Using the generating function $ \widetilde{f}(\xi)=\sum^{\infty}_{n=0}f(t_{n})\xi^{n} $ and
 $$\widetilde{G}(\xi)
=\left( \delta^{\gamma}_{\tau}\left(e^{-\sigma\tau}\xi\right) +A\right)^{-1}\widetilde{f}(\xi)
:=\widetilde{\mathscr{E}}(\delta_{\tau}(\xi))\widetilde{f}(\xi) $$ in   \eqref{5.4}, we obtain
\begin{equation*}
G^{n}=\sum^{n}_{j=0}\mathscr{E}^{n}_{\tau}f(t_{j}) \quad{\rm with} \quad \widetilde{\mathscr{E}}(\delta_{\tau}(\xi))=\sum^{\infty}_{n=0}\mathscr{E}^{n}_{\tau}\xi^{n}.
\end{equation*}

From Cauchy's integral formula and taking the change of variables $\xi=e^{-z\tau}$, we have following integral representation
\begin{equation*}
\mathscr{E}^{n}_{\tau}=\frac{\tau}{2\pi i}\int_{\Gamma^{\tau}_{\theta,\kappa}}{e^{zn\tau}\left(\delta^{\gamma}_{\tau}\left(e^{-(\sigma+z)\tau}\right) +A\right)^{-1}}dz.
\end{equation*}
Using Lemma \ref{Lemma 2.3}, it means that
\begin{equation}\label{3.0002}
\|\mathscr{E}^{n}_{\tau}\| \leq c \tau \left( \int^{\frac{\pi}{\tau\sin(\theta)}}_{\kappa} e^{rt_{n}\cos(\theta)} r^{-\gamma}dr +\int^{\theta}_{-\theta}e^{\kappa t_{n}\cos(\psi)} \kappa^{-\gamma} \kappa d\psi\right)
\leq c\tau t_{n}^{\gamma-1}.
\end{equation}

Let $ \mathscr{E}_{\tau}(t)=\sum^{\infty}_{n=0}\mathscr{E}^{n}_{\tau}\delta_{t_{n}}(t)$ with $\delta_{t_{n}}$  the Dirac delta function at $t_{n}$.
Then
\begin{equation*}
\begin{split}
  (\mathscr{E}_{\tau}(t)\ast f(t))(t_{n})
  &=\left(\sum^{\infty}_{j=0}\mathscr{E}^{j}_{\tau}\delta_{t_{j}}(t) \ast f(t) \right)(t_{n})
  =\sum^{n}_{j=0}\mathscr{E}^{j}_{\tau}f(t_{n}-t_{j})\\
  &=\sum^{n}_{j=0}\mathscr{E}^{n-j}_{\tau}f(t_{j})=G^{n}.
\end{split}
\end{equation*}
Moreover, we have
\begin{equation*}
\begin{split}
  \widetilde{(\mathscr{E}_{\tau}\ast t^{k-1})}(\xi)
& = \sum^{\infty}_{n=0} \sum^{n}_{j=0}\mathscr{E}^{n-j}_{\tau}t_{j}^{k-1}\xi^{n}
  =\sum^{\infty}_{j=0} \sum^{\infty}_{n=j}\mathscr{E}^{n-j}_{\tau}t_{j}^{k-1}\xi^{n}\\
& =\sum^{\infty}_{j=0} \sum^{\infty}_{n=0}\mathscr{E}^{n}_{\tau}t_{j}^{k-1}\xi^{n+j}
  =\sum^{\infty}_{n=0}\mathscr{E}^{n}_{\tau}\xi^{n}\sum^{\infty}_{j=0}t_{j}^{k-1}\xi^{j}\\
& =\widetilde{\mathscr{E}}(\delta_{\tau}(\xi)) \tau^{k-1}\sum^{\infty}_{j=0}j^{k-1}\xi^{j}
  =\widetilde{\mathscr{E}}(\delta_{\tau}(\xi)) \tau^{k-1}\gamma_{k-1}(\xi).
\end{split}
\end{equation*}
From Lemma \ref{lemma3.2}, we have the following estimate
\begin{equation*}
\left\|\left((\mathscr{E}_{\tau}-\mathscr{E}) \ast \frac{t^{k-1}}{(k-1)!}\right)(t_{n})\right\| \leq c\tau^{k}t_{n}^{\gamma-1}.
\end{equation*}

Next, we prove the following inequality \eqref{3.0003}  for $t>0$
\begin{equation}\label{3.0003}
\left\|\left((\mathscr{E}_{\tau}-\mathscr{E}) \ast \frac{t^{k-1}}{(k-1)!}\right)(t)\right\| \leq c\tau^{k}t^{\gamma-1},\quad \forall t\in (t_{n-1},t_{n}).
\end{equation}
Using the Taylor series expansion of $\mathscr{E}(t)$ at $t=t_{n}$, we get
\begin{equation*}
\begin{split}
 &\left( \mathscr{E} \ast \frac{t^{k-1}}{(k-1)!}\right)(t)\\
=&\left( \mathscr{E} \ast \frac{t^{k-1}}{(k-1)!}\right)(t_{n})+(t-t_{n})\left( \mathscr{E} \ast \frac{t^{k-2}}{(k-2)!}\right)(t_{n})+\cdots
 + \frac{(t-t_{n})^{k-2}}{(k-2)!}\left( \mathscr{E} \ast t \right)(t_{n})\\
 &+ \frac{(t-t_{n})^{k-1}}{(k-1)!}\left( \mathscr{E} \ast 1 \right)(t_{n})
 +\frac{1}{(k-1)!}\int^{t}_{t_{n}}(t-s)^{k-1}\mathscr{E}(s)ds.
\end{split}
\end{equation*}
This above  expansion also holds  for $  \left( \mathscr{E}_{\tau} \ast \frac{t^{k-1}}{(k-1)!}\right)(t) $.
Then we have
\begin{equation*}
\left\|\left((\mathscr{E}_{\tau}-\mathscr{E}) \ast \frac{t^{l}}{l!}\right)(t_{n})\right\| \leq c\tau^{l+1}t_{n}^{\gamma-1}.
\end{equation*}

Using \eqref{2.11111}, we have
\begin{equation*}
\begin{split}
\| \mathscr{E}(t) \|
\leq c \left( \int^{\infty}_{\kappa}e^{rt\cos\theta}r^{-\gamma}dr + \int^{\theta}_{-\theta}e^{\kappa t \cos\psi}\kappa^{1-\gamma}d\psi \right)
\leq c t^{\gamma-1}
\end{split}
\end{equation*}
and
\begin{equation*}
\left\| \int^{t}_{t_{n}}(t-s)^{k-1}\mathscr{E}(s)ds \right\| \leq c \int^{t_{n}}_{t}(s-t)^{k-1}s^{\gamma-1}ds \leq c \tau^{k} t^{\gamma-1}.
\end{equation*}
Similarly, from \eqref{3.0002}, we deduce
\begin{equation*}
\left\| \int^{t}_{t_{n}}(t-s)^{k-1}\mathscr{E}_{\tau}(s)ds \right\| \leq c\tau^{k-1} \| \mathscr{E}^{n}_{\tau} \| \leq c \tau^{k} t_{n}^{\gamma-1}.
\end{equation*}
Then we can obtain \eqref{3.0003} by $t_{n}^{\gamma-1}\leq t^{\gamma-1}$ for $t\in (t_{n-1},t_{n})$ and $\gamma\in(0,1)$.
The proof is completed.
\end{proof}

\begin{theorem}\label{Theorem 1}
Let $f\in C^{k-1}([0,T];L^{2}(\Omega))$ and $\int^{t}_{0}{(t-s)^{\gamma-1}}||\partial^{l}_{s}f(s)||_{L^{2}(\Omega)}ds<\infty$. Let $G(t_{n})$ and $G^{n}$ be the  solutions of \eqref{1.1} and \eqref{2.4} at the point $t_{n}$, respectively. Let $\varepsilon^{n}=G^{n}-G(t_{n})$ with $\varepsilon^{0}=0$. Then
\begin{equation*}
\begin{split}
||\varepsilon^{n}||
=||G^{n}-G(t_{n})||
&\leq c\tau^{k} \left(t^{\gamma-k}_{n}\left\|A G^{0}\right\| + t^{\gamma-k}_{n}\left\|f(0)\right\|
    +\sum^{k-1}_{l=1}t^{\gamma+l-k}_{n}\left\|\partial^{l}_{t}f(0)\right\| \right. \\
&   \left. \qquad\qquad + \int^{t_{n}}_{0} (t_{n}-s)^{\gamma-1} \| \partial^{k}_{s}f(s) \| ds\right).
\end{split}
\end{equation*}
\end{theorem}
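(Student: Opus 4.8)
The plan is to subtract the contour representation \eqref{2.10} of $W(t_n)=G(t_n)-e^{-\sigma t_n}G(0)$ from the discrete representation \eqref{2.13} of $W^n=G^n-e^{-\sigma t_n}G(0)$, using $\varepsilon^n=G^n-G(t_n)=W^n-W(t_n)$. Both representations are linear in the data $AG^0$, $f(0)$, $\partial^l_t f(0)$ ($1\le l\le k-2$) and the remainder $R_k$ from the splitting \eqref{2.5}--\eqref{2.6}, so the error splits as
\[
\varepsilon^n=\varepsilon^n_{AG^0}+\varepsilon^n_{f(0)}+\sum_{l=1}^{k-2}\varepsilon^n_{l}+\varepsilon^n_{R_k},
\]
and I would estimate each piece separately. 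For the first three groups, $\varepsilon^n_{\bullet}$ is the integral over $\Gamma^\tau_{\theta,\kappa}$ of the difference of the two integrands, plus the tail $\int_{\Gamma_{\theta,\kappa}\setminus\Gamma^\tau_{\theta,\kappa}}e^{zt_n}(\cdots)\,dz$ of the continuous representation; for $R_k$ I would invoke Lemmas \ref{lemma3.2} and \ref{lemma3.3} directly. Throughout, $\kappa$ is taken large compared with $\sigma$, so that $|\sigma+z|\simeq|z|$ on the contour.

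For $\varepsilon^n_{AG^0}$ I would split
\[
K(\delta_\tau(e^{-(\sigma+z)\tau}))\,\mu_1(e^{-(\sigma+z)\tau})-K(\sigma+z)
=\bigl[K(\delta_\tau(e^{-(\sigma+z)\tau}))-K(\sigma+z)\bigr]\mu_1+K(\sigma+z)\bigl[\mu_1-1\bigr],
\]
bounding the first bracket by Lemma \ref{Lemma 2.5} and the second by \eqref{2.11111} and the $\mu_1$-estimate of Lemma \ref{Lemma 2.0001}, together with the boundedness of $|\mu_1|$; this makes the integrand at most $c\tau^k|z|^{k-1-\gamma}|e^{zt_n}|\,\norm{AG^0}$. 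Parametrising $\Gamma^\tau_{\theta,\kappa}$ as in \eqref{2.a10} and substituting $s=|z|t_n$ on the rays turns $\int_\kappa^{\pi/(\tau\sin\theta)}e^{rt_n\cos\theta}r^{k-1-\gamma}\,dr$ into $c\,t_n^{\gamma-k}$ (the integral converges since $\cos\theta<0$ and $k>\gamma$), while the arc contributes $c\tau^k\kappa^{k-\gamma}e^{\kappa T}\le c\tau^k t_n^{\gamma-k}$ because $t_n\le T$ and $k-\gamma>0$; the tail is handled via $r\ge\pi/(\tau\sin\theta)$, hence $r^{-1-\gamma}\le c\tau^k r^{k-1-\gamma}$, which integrates to the same bound. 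Thus $\norm{\varepsilon^n_{AG^0}}\le c\tau^k t_n^{\gamma-k}\norm{AG^0}$. The term $\varepsilon^n_{f(0)}$ is treated identically, additionally peeling off $\delta_\tau(e^{-(\sigma+z)\tau})/\delta_\tau(e^{-z\tau})-(\sigma+z)/z$ via Lemma \ref{Lemma 2.0002} and $\mu_2-1$ via Lemma \ref{Lemma 2.0001}; since $\norm{K(\sigma+z)(\sigma+z)/z}\le c|z|^{-1-\gamma}$, the same computation gives $\norm{\varepsilon^n_{f(0)}}\le c\tau^k t_n^{\gamma-k}\norm{f(0)}$.

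For $\varepsilon^n_{l}$ ($1\le l\le k-2$) I would use $K(\delta_\tau)\delta_\tau=(\delta_\tau^\gamma+A)^{-1}$ and $(\sigma+z)K(\sigma+z)=((\sigma+z)^\gamma+A)^{-1}$ from \eqref{2.11} and split the integrand into the resolvent difference $(\delta_\tau^\gamma+A)^{-1}-((\sigma+z)^\gamma+A)^{-1}$ times the symbol $(\tfrac{\gamma_l(e^{-z\tau})}{l!}+\sum_{j=1}^{k-1} d^{(k)}_{l,j}e^{-zj\tau})\tau^{l+1}$, plus $((\sigma+z)^\gamma+A)^{-1}$ times the symbol difference $(\tfrac{\gamma_l(e^{-z\tau})}{l!}+\sum_{j=1}^{k-1} d^{(k)}_{l,j}e^{-zj\tau})\tau^{l+1}-z^{-l-1}$. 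The resolvent difference is at most $c\tau^k|z|^{k-\gamma}$ by Lemma \ref{Lemma 2.3} and \eqref{2.11111}, the symbol is at most $c|z|^{-l-1}$ on $\Gamma^\tau_{\theta,\kappa}$ (from Lemma \ref{lemma3.1} and $\tau^{l+1}\le c|z|^{-l-1}$ there), and the symbol difference is at most $c\tau^k|z|^{k-l-1}$ by Lemma \ref{Lemma 2.0001}, so the integrand is at most $c\tau^k|z|^{k-l-1-\gamma}|e^{zt_n}|\,\norm{\partial^l_t f(0)}$; the ray, arc and tail estimates then give $\norm{\varepsilon^n_{l}}\le c\tau^k t_n^{\gamma+l-k}\norm{\partial^l_t f(0)}$. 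Finally, writing $R_k=\frac{t^{k-1}}{(k-1)!}\partial^{k-1}_t f(0)+\frac{t^{k-1}}{(k-1)!}\ast\partial^k_t f$, Lemma \ref{lemma3.2} with $g=\partial^{k-1}_t f(0)$ bounds the error from the first term by $c\tau^k\int_0^{t_n}(t_n-s)^{\gamma-1}\,ds\,\norm{\partial^{k-1}_t f(0)}=c\tau^k t_n^{\gamma}\norm{\partial^{k-1}_t f(0)}\le c\tau^k t_n^{\gamma-1}\norm{\partial^{k-1}_t f(0)}$, and Lemma \ref{lemma3.3} bounds the error from the second term by $c\tau^k\int_0^{t_n}(t_n-s)^{\gamma-1}\norm{\partial^k_s f(s)}\,ds$. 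Adding the five bounds and absorbing the $l=k-1$ term into the sum (using $t_n^{\gamma-1}=t_n^{\gamma+(k-1)-k}$) yields exactly the claimed estimate.

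The main obstacle is the bookkeeping in these term-by-term integrand bounds: each difference must be written so that a factor $\tau^k$ and a power $|z|^\beta$ can be peeled off, with $\beta$ chosen so that, after the substitution $s=|z|t_n$, the integral produces precisely the singular weight $t_n^{\gamma+l-k}$ (respectively $t_n^{\gamma-k}$); this forces one to combine the resolvent bounds \eqref{2.11111}, the symbol estimates of Lemmas \ref{Lemma 2.3}, \ref{Lemma 2.5}, \ref{Lemma 2.0001}, \ref{Lemma 2.0002} and the $\gamma_l$-estimate of Lemma \ref{lemma3.1} in the right order, while the fractional Laplacian $A$ enters only through \eqref{2.11111} and the substantial shift $\sigma$ is absorbed into the choice of $\kappa$. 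Once the integrand bounds are in place, the convergence of the contour integrals (via $e^{zt_n\cos\theta}$ with $\cos\theta<0$) and the tail estimates are routine.
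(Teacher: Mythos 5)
Your proposal is correct and follows essentially the same route as the paper's proof: the same subtraction of the continuous representation \eqref{2.10} from the discrete one \eqref{2.13}, the same term-by-term decomposition into contributions from $AG^{0}$, $f(0)$, $\partial_{t}^{l}f(0)$ and the remainder $R_{k}$ (the paper merely collects the three contour tails into a single term $I_{5}$ rather than distributing them), and the same key ingredients --- Lemmas \ref{Lemma 2.3}, \ref{Lemma 2.5}, \ref{Lemma 2.0001}, \ref{Lemma 2.0002} with the resolvent bound \eqref{2.11111} for the kernel differences, and Lemmas \ref{lemma3.2} and \ref{lemma3.3} for the two pieces of $R_{k}$. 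The contour estimates (substitution $s=|z|t_{n}$ on the rays, the arc bound, and the tail bound via $1\le c\,\tau^{k}r^{k}$ for $r\ge \pi/(\tau\sin\theta)$) match the paper's computation exactly.
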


\begin{proof}
Using
$$G^{n}-G(t_{n})=W^{n}+e^{-\sigma n \tau}G(0)-(W(t_{n})+e^{-\sigma t_{n}}G(0))=W^{n}-W(t_{n})$$ and subtracting \eqref{2.10} from \eqref{2.13}, we have
\begin{equation}\label{3.1}
G^{n}-G(t_{n})=I_{1}+I_{2}+\sum^{k-2}_{l=1}I_{l,3}+I_{4}-I_{5}.
\end{equation}
Here
\begin{equation*}
\begin{split}
I_{1} =& \frac{1}{2 \pi i} \int_{\Gamma^{\tau}_{\theta,\kappa}} -e^{t_{n}z} \left[K\left(\delta_{\tau}(e^{-(\sigma+z)\tau})\right)
         \mu_{1}(e^{-(\sigma+z)\tau}) -K(\sigma+z)\right]AG^{0}dz; \\
I_{2} =& \frac{1}{2 \pi i} \!\int_{\Gamma^{\tau}_{\theta,\kappa}}\!\! e^{t_{n}z} \left[K\left(\delta_{\tau}(e^{-(\sigma+z)\tau})\right)
         \frac{\delta_{\tau}(e^{-(\sigma+z)\tau})}{\delta_{\tau}(e^{-z\tau})}\mu_{2}(e^{-z\tau}) \!-\!K(\sigma+z)\frac{\sigma+z}{z}\right]f(0)dz ;\\
I_{l,3} =& \frac{1}{2 \pi i}\!\int_{\Gamma^{\tau}_{\theta,\kappa}}\!\!e^{t_{n}z}\!\left[K\left(\delta_{\tau}(e^{-(\sigma+z)\tau})\right)\delta_{\tau}
(e^{-(\sigma+z)\tau})\!\sum^{k-2}_{l=1}\!\left(\frac{\gamma_{l}(e^{-z\tau})}{l!} \right. \right. \\
&\qquad\qquad\qquad\qquad \qquad \left. \left.+\sum^{k-1}_{j=1}\!d^{(k)}_{l,n}e^{-z j\tau}\right)\!\tau^{l+1}
 -K(\sigma+z)\frac{\sigma+z}{z^{l+1}}\right]\partial^{l}_{t}f(0)dz ;\\
I_{4} =& \frac{1}{2 \pi i} \int_{\Gamma^{\tau}_{\theta,\kappa}}e^{t_{n}z}K\left(\delta_{\tau}(e^{-(\sigma+z)\tau})\right)
  \delta_{\tau}(e^{-(\sigma+z)\tau})\tau\widetilde{R}_{k}(e^{-z\tau})dz\\
 & - \frac{1}{2\pi i} \int_{\Gamma_{\theta,\kappa}} {e^{zt_{n}}K(\sigma+z)(\sigma+z)\widehat{R}_{k} } dz;\\
I_{5} =& \frac{1}{2\pi i} \int_{\Gamma_{\theta,\kappa}\backslash \Gamma^{\tau}_{\theta,\kappa}} {e^{zt_{n}}K(\sigma+z)\left( -A G(0)+\frac{\sigma+z}{z} f(0) + (\sigma+z)\sum^{k-2}_{l=1}\frac{1}{z^{l+1}}\partial^{l}_{t}f(0) \right) } dz.
\end{split}
\end{equation*}
According to Lemma \ref{Lemma 2.0001} and Lemma \ref{Lemma 2.5}, we estimate the first term $I_{1}$ as following
\begin{equation*}
\begin{split}
\left\|I_{1}\right\|
=& \left\|\frac{1}{2 \pi i} \int_{\Gamma^{\tau}_{\theta,\kappa}} -e^{t_{n}z} \left[K\left(\delta_{\tau}(e^{-(\sigma+z)\tau})\right)
         \mu_{1}(e^{-(\sigma+z)\tau}) -K(\sigma+z)\right]AG^{0}dz \right\|\\
\le& c\tau^{k} \left\| AG^{0} \right\|  \int^{\frac{\pi}{\tau\sin\theta}}_{\kappa} e^{t_{n}r\cos\theta} r^{k-1-\gamma}dr
    +c\tau^{k} \left\| AG^{0} \right\|  \int^{\theta}_{-\theta} e^{t_{n}\kappa\cos\psi} \kappa^{k-\gamma} d\psi\\
\le& c\tau^{k} \left\| AG^{0} \right\|  t_{n}^{\gamma-k}\int^{\frac{\pi t_{n}}{\tau\sin\theta}}_{\kappa t_{n}} e^{s\cos\theta} ds
    +c\tau^{k} \left\| AG^{0} \right\|  \kappa^{k-\gamma}\int^{\theta}_{-\theta} e^{T\kappa} d\psi\\
\le& c\tau^{k} \left\| AG^{0} \right\|  (t_{n}^{\gamma-k} + \kappa^{k-\gamma} e^{T\kappa} )
\le  c\tau^{k} \left\|AG^{0}  \right\|  \left(t_{n}^{\gamma-k}+(T\kappa)^{k-\gamma}e^{T\kappa}t_{n}^{\gamma-k}\right)\\
\le& c\tau^{k}\left\|AG^{0}\right\|t_{n}^{\gamma-k}.
\end{split}
\end{equation*}
From Lemma \ref{Lemma 2.0001}, Lemma \ref{Lemma 2.0002} and Lemma \ref{Lemma 2.5}, we estimate the second term $I_{2}$ as following in a similar way to $I_{1}$, i.e.,
\begin{equation*}
\left\|I_{2}\right\|  \le c\tau^{k} \left\| f(0) \right\|  t_{n}^{\gamma-k}.
\end{equation*}
By Lemma \ref{Lemma 2.0001} and Lemma \ref{Lemma 2.5}, we estimate the third term $I_{l,3}$
\begin{equation*}
\begin{split}
\left\|I_{l,3}\right\|
\le&   c\tau^{k} \left\| \partial^{l}_{t}f(0)\right\| \left(\int^{\frac{\pi}{\tau\sin\theta}}_{\kappa} e^{t_{n}r\cos\theta}r^{k-l-1-\gamma} dr
    + \int^{\theta}_{-\theta} e^{t_{n}\kappa\cos\psi} \kappa^{k-l-\gamma} d\psi \right)\\
\le  &   c\tau^{k} \left\| \partial^{l}_{t}f(0)\right\| t_{n}^{\gamma+l-k}, \qquad l=1,2,\ldots,k-2.
\end{split}
\end{equation*}
Direct calculation $I_{5}$ as following
\begin{equation*}
\begin{split}
\left\|I_{5}\right\|
\le& c\int_{\Gamma_{\theta,\kappa}\backslash \Gamma_{\theta,\kappa}} {\left|e^{zt_{n}}\right||\sigma+z|^{-1-\gamma} \left\|A G(0)\right\| } |dz|\\
&+c\int_{\Gamma_{\theta,\kappa}\backslash \Gamma_{\theta,\kappa}} {\left|e^{zt_{n}}\right||\sigma+z|^{-1-\gamma}
          \left|\frac{\sigma+z}{z}\right| \left\|f(0)\right\| } |dz|\\
&+c\int_{\Gamma_{\theta,\kappa}\backslash \Gamma_{\theta,\kappa}} {\left|e^{zt_{n}}\right||\sigma+z|^{-1-\gamma}|\sigma+z|\sum^{k-2}_{l=1}
          \frac{1}{|z|^{l+1}}\left\|\partial^{l}_{t}f(0)\right\|} |dz|\\
\le& c\left\|A G(0)\right\|\int^{\infty}_{\frac{\pi}{\tau\sin\theta}} {e^{t_{n}r\cos\theta}r^{-1-\gamma} } dr
    +c\left\|f(0)\right\|\int^{\infty}_{\frac{\pi}{\tau\sin\theta}} {e^{t_{n}r\cos\theta}r^{-1-\gamma}  } dr\\
&   +c\sum^{k-2}_{l=1}\left\|\partial^{l}_{t}f(0)\right\|\int^{\infty}_{\frac{\pi}{\tau\sin\theta}} {e^{t_{n}r\cos\theta}\frac{1}{r^{l+1+\gamma}}} dr\\
\le& c\tau^{k} \left(t^{\gamma-k}_{n}\left\|A G(0)\right\| + t^{\gamma-k}_{n}\left\|f(0)\right\|
    +\sum^{k-2}_{l=1}t^{\gamma+l-k}_{n}\left\|\partial^{l}_{t}f(0)\right\|\right)
\end{split}
\end{equation*}
for the last inequation, we use
\begin{equation*}
\begin{split}
\int^{\infty}_{\frac{\pi}{\tau\sin\theta}} {e^{t_{n}r\cos\theta}r^{-1-\gamma} } dr
\le& c\tau^{k} \int^{\infty}_{\frac{\pi}{\tau\sin\theta}} {e^{t_{n}r\cos\theta}r^{k-1-\gamma} } dr\\
=&    c\tau^{k} t^{\gamma-k}_{n}\int^{\infty}_{\frac{t_{n}\pi}{\tau\sin\theta}} {e^{s\cos\theta}s^{k-1-\gamma} } ds \le c\tau^{k} t^{\gamma-k}_{n},
\end{split}
\end{equation*}
for the above inequality, we using $1\le \left( \frac{\sin\theta}{\pi} \right)^{k} \tau^{k}r^{k}$, since $r\ge\frac{\pi}{\tau\sin\theta}$.

Next we estimate $I_{4}$,
from \eqref{2.6}$R_{k}=\frac{t^{k-1}}{(k-1)!}\partial^{k-1}_{t}f(0)+\frac{t^{k-1}}{(k-1)!} \ast \partial^{k}_{t}f(t)=R^{1}_{k}+R^{2}_{k},$
so $I_{4}=I^{1}_{4}+I^{2}_{4}$, we have
\begin{equation*}
\begin{split}
I^{1}_{4}
=& \frac{1}{2 \pi i} \int_{\Gamma^{\tau}_{\theta,\kappa}}e^{t_{n}z}K\left(\delta_{\tau}(e^{-(\sigma+z)\tau})\right)\delta_{\tau}(e^{-(\sigma+z)\tau})\tau
   \widetilde{R}^{1}_{k}(e^{-z\tau})dz \\
  & - \frac{1}{2\pi i} \int_{\Gamma_{\theta,\kappa}} {e^{zt_{n}}K(\sigma+z)(\sigma+z)\widehat{R}^{1}_{k} } dz\\
=& \frac{1}{2 \pi i} \int_{\Gamma^{\tau}_{\theta,\kappa}}e^{t_{n}z}\left(\delta_{\tau}(e^{-(\sigma+z)\tau})+A\right)^{-1}
   \frac{\gamma_{k-1}(e^{-z\tau})}{(k-1)!}\tau^{k}\partial^{k-1}_{t}f(0)dz\\
 & - \frac{1}{2\pi i} \int_{\Gamma^{\tau}_{\theta,\kappa}} {e^{zt_{n}}\left((\sigma+z)^{\gamma}+A\right)^{-1}\frac{\partial^{k-1}_{t}f(0)}{z^{k}}} dz\\
 &  - \frac{1}{2\pi i} \int_{\Gamma_{\theta,\kappa}\backslash\Gamma^{\tau}_{\theta,\kappa}}
     {e^{zt_{n}}\left((\sigma+z)^{\gamma}+A\right)^{-1} \frac{\partial^{k-1}_{t}f(0)}{z^{k}}} dz.
 \end{split}
\end{equation*}
From  Lemma \ref{lemma3.2}, it yields
\begin{equation*}
\left\|I^{1}_{4}\right\| \leq c\tau^{k} t_{n}^{\gamma-1} \left\| \partial^{k-1}_{t}f(0)\right\| .
\end{equation*}
Similarly, using  Lemma \ref{lemma3.3} with $g(t)=\partial^{k}_{t}f(t)$, we get
\begin{equation*}
\left\|I^{2}_{4}\right\| \leq  c \tau^{k}\int^{t_{n}}_{0} (t_{n}-s)^{\gamma-1} \| \partial^{k}_{s}f(s) \| ds.
\end{equation*}
The proof is completed.
\end{proof}



\section{Numerical results}
We now numerically verify the above theoretical results including convergence orders  of correction BDF$k$ scheme \eqref{2.4} for \eqref{1.1} in one spatial dimension.
In space direction, it is discretized with the   spectral collocation method  with the Chebyshev-Gauss-Lobatto points \cite{Shen:11} in the interval $\Omega=(-1,1)$.
Since the convergence rate of the spatial discretization is well understood, we focus on the time direction convergence order.
Let us consider the following three examples:
\begin{description}
  \item[(a)] $G_{0}=\sqrt{1-x^2}$ and $ f(x,t)=0$.
  \item[(b)] $G_{0}=0$ and $ f(x,t)=(t+1)^5\left(1+\chi_{(0,1)}(x)\right)$.
  \item[(c)] $G_{0}=\sqrt{1-x^2}$ and $ f(x,t)=\cos(t)\left(1+\chi_{(0,1)}(x)\right)$.
\end{description}

Since the analytic solutions is unknown, the order of the convergence of the numerical results are computed by the following formula
\begin{equation*}
  {\rm Convergence ~Rate}=\frac{\ln \left(||G^{N/2}-G^{N}||_\infty/||G^{N}-G^{2N}||_\infty\right)}{\ln 2}.
\end{equation*}

\begin{table}[h]\fontsize{8.5pt}{12pt}\selectfont
\begin{center}
\caption {The maximum errors and convergent order of correction BDF$k$ scheme \eqref{2.4} for example (a) with $\sigma=0.5$ and $T=1$.} \vspace{5pt}
\begin{tabular}{|c| c | c c c c | c |}                                \hline  
    $(\alpha,\gamma)$  & \diagbox[dir=SE]{$k$}{$N$} &   40           &   80          &   160          &   320                   & Rate         \\ \hline
              &   2             &   8.7495e-06   &   2.1453e-06   &   5.3116e-07   &   1.3215e-07   &   $\approx$2.0163               \\
              &   3             &   5.0931e-07   &   6.0766e-08   &   7.4234e-09   &   9.1742e-10   &   $\approx$3.0389               \\
(1.7,0.3)    &   4             &   4.2355e-08   &   2.4277e-09   &   1.4544e-10   &   8.9015e-12   &   $\approx$4.0721               \\
              &   5             &   6.9984e-09   &   1.2741e-10   &   3.7184e-12   &   1.1269e-13   &   $\approx$5.3075               \\
              &   6             &   7.8500e-07   &   4.8068e-09   &   1.5732e-13   &   2.7200e-15   &   $\geq$6.0000                 \\
              \hline 
              &   2             &   2.7568e-05   &   6.8210e-06   &   1.6966e-06   &   4.2309e-07   &   $\approx$2.0086               \\
              &   3             &   1.0888e-06   &   1.3273e-07   &   1.6383e-08   &   2.0350e-09   &   $\approx$3.0212               \\
  (1.3, 0.7) &   4             &   6.4696e-08   &   3.8130e-09   &   2.3168e-10   &   1.4275e-11   &   $\approx$4.0486               \\
              &   5             &   6.0435e-09   &   1.6306e-10   &   4.8147e-12   &   1.2657e-13   &   $\approx$5.1811               \\
              &   6             &   2.9492e-05   &   3.3112e-08   &   2.3309e-13   &   5.1903e-14   &   $\geq$6.0000                 \\
              \hline 
\end{tabular}\label{table:a}
\end{center}
\end{table}

\begin{table}[h]\fontsize{8.5pt}{12pt}\selectfont
\begin{center}
\caption {The maximum errors and convergent order of correction BDF$k$ scheme \eqref{2.4} for example (b)  with $\sigma=0.5$ and $T=1$.} \vspace{5pt}
\begin{tabular}{|c| c | c c c c | c |}                                \hline  
    $(\alpha,\gamma)$  & \diagbox[dir=SE]{$k$}{$N$} &   40           &   80          &   160          &   320                   & Rate         \\ \hline
              &   2             &   1.2062e-03   &   3.0857e-04   &   7.8041e-05   &   1.9624e-05   &   $\approx$1.9806               \\
              &   3             &   5.5613e-05   &   7.1582e-06   &   9.0795e-07   &   1.1432e-07   &   $\approx$2.9754              \\
   (1.7,0.3)  &   4             &   2.1564e-06   &   1.3904e-07   &   8.8267e-09   &   5.5601e-10   &   $\approx$3.9738             \\
              &   5             &   6.4535e-08   &   2.1478e-09   &   6.8484e-11   &   2.1938e-12   &   $\approx$4.9481               \\
              &   6             &   7.8075e-07   &   2.3185e-09   &   5.7554e-13   &   4.2633e-14   &   $\geq$6.0000                 \\
              \hline 
              &   2             &   9.0605e-03   &   2.3200e-03   &   5.8706e-04   &   1.4766e-04   &   $\approx$1.9798             \\
              &   3             &   4.3627e-04   &   5.6219e-05   &   7.1349e-06   &   8.9864e-07   &   $\approx$2.9744               \\
   (1.3, 0.7) &   4             &   1.7567e-05   &   1.1341e-06   &   7.2043e-08   &   4.5394e-09   &   $\approx$3.9727               \\
              &   5             &   6.0435e-09   &   1.6306e-10   &   4.8147e-12   &   1.2657e-13   &   $\approx$5.1811               \\
              &   6             &   4.6825e-05   &   5.7562e-08   &   5.6595e-12   &   1.1688e-12   &   $\geq$6.0000                 \\
              \hline 
\end{tabular}\label{table:b}
\end{center}
\end{table}

\begin{table}[h]\fontsize{8.5pt}{12pt}\selectfont
\begin{center}
\caption {The maximum errors and convergent order of correction BDF$k$ scheme \eqref{2.4} for example (c)  with $\sigma=0.5$ and $T=1$.} \vspace{5pt}
\begin{tabular}{|c| c | c c c c | c |}                                \hline  
    $(\alpha,\gamma)$  & \diagbox[dir=SE]{$k$}{$N$} &   40           &   80          &   160          &   320                   & Rate         \\ \hline
              &   2             &   8.5901e-06   &   2.1399e-06   &   5.3403e-07   &   1.3339e-07   &   $\approx$2.0030               \\
              &   3             &   1.0845e-07   &   1.3026e-08   &   1.5976e-09   &   1.9785e-10   &   $\approx$3.0328              \\
   (1.7,0.3)  &   4             &   8.3418e-09   &   4.6450e-10   &   2.7389e-11   &   1.6622e-12   &   $\approx$4.0977             \\
              &   5             &   2.2068e-09   &   3.2239e-11   &   9.4552e-13   &   2.9421e-14   &   $\approx$5.3982               \\
              &   6             &   2.9255e-07   &   2.5148e-09   &   4.6241e-14   &   1.4988e-15   &   $\geq$6.0000                 \\
              \hline 
              &   2             &   2.8858e-05   &   7.4063e-06   &   1.8749e-06   &   4.7159e-07   &   $\approx$1.9784             \\
              &   3             &   2.5394e-06   &   3.0437e-07   &   3.7255e-08   &   4.6082e-09   &   $\approx$3.0354               \\
   (1.3, 0.7) &   4             &   1.8942e-07   &   1.1115e-08   &   6.7379e-10   &   4.1483e-11   &   $\approx$4.0522               \\
              &   5             &   1.4510e-08   &   4.3197e-10   &   1.2675e-11   &   3.6726e-13   &   $\approx$5.0899               \\
              &   6             &   1.6651e-05   &   2.4673e-08   &   4.4675e-13   &   2.4092e-14   &   $\geq$6.0000                 \\
              \hline 
\end{tabular}\label{table:c}
\end{center}
\end{table}

\begin{table}[h]\fontsize{8.5pt}{12pt}\selectfont
\begin{center}
\caption {The maximum errors and convergent order of stand BDF$k$ scheme \eqref{2.3} for example (a)  with $\sigma=0.5$ and $T=1$.} \vspace{5pt}
\begin{tabular}{|c| c | c c  c c | c |}                                \hline  
    $(\alpha,\gamma)$  & \diagbox[dir=SE]{$k$}{$N$} &   40           &   80          &   160          &   320                    & Rate         \\ \hline
              &   2             &   1.9994e-04   &   9.9876e-05   &   4.9910e-05   &   2.4948e-05   &   $\approx$1.0009                \\
              &   3             &   2.0105e-04   &   1.0014e-04   &   4.9974e-05   &   2.4963e-05   &   $\approx$1.0032                \\
  (1.7,0.3)  &   4             &   2.0100e-04   &   1.0013e-04   &   4.9973e-05   &   2.4963e-05   &   $\approx$1.0031                \\
              &   5             &   2.0101e-04   &   1.0013e-04   &   4.9973e-05   &   2.4963e-05   &   $\approx$1.0031                \\
              &   6             &   2.0585e-04   &   1.0014e-04   &   4.9973e-05   &   2.4963e-05   &   $\approx$1.0146                 \\
              \hline 
              &   2             &   8.1394e-04   &   4.0421e-04   &   2.0145e-04   &   1.0057e-04   &     $\approx$1.0055            \\
              &   3             &   8.0891e-04   &   4.0313e-04   &   2.0121e-04   &   1.0051e-04   &     $\approx$1.0029             \\
 (1.3, 0.7) &   4             &   8.0950e-04   &   4.0320e-04   &   2.0121e-04   &   1.0051e-04   &     $\approx$1.0033             \\
              &   5             &   8.0946e-04   &   4.0319e-04   &   2.0121e-04   &   1.0051e-04   &     $\approx$1.0032             \\
              &   6             &   8.1383e-04   &   4.0318e-04   &   2.0121e-04   &   1.0051e-04   &     $\approx$1.0058               \\
              \hline 
\end{tabular}\label{table:aa}
\end{center}
\end{table}

\begin{table}[h]\fontsize{8.5pt}{12pt}\selectfont
\begin{center}
\caption {The maximum errors and convergent order of stand BDF$k$ scheme \eqref{2.3} for example (c)  with $\sigma=0.5$ and $T=1$.} \vspace{5pt}
\begin{tabular}{|c| c | c c c c | c |}                                \hline  
    $(\alpha,\gamma)$  & \diagbox[dir=SE]{$k$}{$N$} &   40           &   80          &   160          &   320                   & Rate         \\ \hline
              &   2             &   9.4869e-05   &   4.8909e-05   &   2.4823e-05   &   1.2503e-05   &   $\approx$0.9745               \\
              &   3             &   1.0120e-04   &   5.0491e-05   &   2.5218e-05   &   1.2602e-05   &   $\approx$1.0018              \\
   (1.7,0.3)  &   4             &   1.0120e-04   &   5.0492e-05   &   2.5218e-05   &   1.2602e-05   &   $\approx$1.0019             \\
              &   5             &   1.0120e-04   &   5.0492e-05   &   2.5218e-05   &   1.2602e-05   &   $\approx$1.0018               \\
              &   6             &   1.0363e-04   &   5.0496e-05   &   2.5218e-05   &   1.2602e-05   &   $\approx$1.0133                 \\
              \hline 
              &   2             &   3.4862e-04   &   1.5918e-04   &   7.5788e-05   &   3.6941e-05   &   $\approx$1.0794             \\
              &   3             &   2.9394e-04   &   1.4540e-04   &   7.2331e-05   &   3.6076e-05   &   $\approx$1.0088               \\
   (1.3, 0.7) &   4             &   2.9348e-04   &   1.4535e-04   &   7.2325e-05   &   3.6075e-05   &   $\approx$1.0081               \\
              &   5             &   2.9350e-04   &   1.4535e-04   &   7.2325e-05   &   3.6075e-05   &   $\approx$1.0081               \\
              &   6             &   3.0291e-04   &   1.4536e-04   &   7.2325e-05   &   3.6075e-05   &   $\approx$1.0233                 \\
              \hline 
\end{tabular}\label{table:cc}
\end{center}
\end{table}
Tables \ref{table:aa} and \ref{table:cc} show that the stand  BDF$k$ scheme  in \eqref{2.3} just  achieves the  first-order convergence  and
the corrected BDF$k$ scheme in \eqref{2.4} preserves the high-order convergence rate with the nonsmooth data.

\section*{Acknowledgments}
The  authors are grateful to Professor Martin Stynes and Dr. Zhi Zhou for them  valuable comments.

\bibliographystyle{amsplain}

\end{document}